\newtheorem{lemma}{Lemma}[section]
\newtheorem{theorem}[lemma]{Theorem}
\newtheorem{proposition}[lemma]{Proposition}
\newcommand{\Aut}{{\rm Aut}}
\newcommand{\shortm}{\!-\!}
\begin{document}

\title{Answers to questions about medial layer graphs of self-dual regular and chiral polytopes}

\author{
Marston Conder
\\
{\normalsize Department of Mathematics, University of Auckland,}\\[-4pt]
{\normalsize Private Bag 92019, Auckland 1142, New Zealand} \\[-4pt]
{\normalsize Email: m.conder@auckland.ac.nz}
\\[+12pt]
Isabelle Steinmann
\\
{\normalsize Eckhart Hall, University of Chicago, 5734 S University Ave, } \\[-4pt]
{\normalsize Chicago, IL 60637, USA} \\[-4pt]
{\normalsize Email: isteinmann@uchicago.edu}
}

\date{}

\maketitle

\begin{abstract}
{An abstract $n$-polytope $\mathcal{P}$ is a partially-ordered set which captures important properties of a geometric polytope, for any dimension $n$. For even $n \ge 2$, the incidences between elements in the middle two layers of the Hasse diagram of $\mathcal{P}$ give rise to the medial layer graph of  $\mathcal{P}$, denoted by $\mathcal{G} = \mathcal{G}(\mathcal{P})$. If $n=4$, and $\mathcal{P}$ is both highly symmetric and self-dual of type $\{p,q,p\}$, then a Cayley graph $\mathcal{C}$ covering $\mathcal{G}$ can be constructed on a group of polarities of $\mathcal{P}$. 
In this paper we address some open questions about the relationship between $\mathcal{G}$ and $\mathcal{C}$ that were raised in a 2008 paper by Monson and Weiss, and describe some interesting examples of these graphs. In particular, we give the first known examples of improperly self-dual chiral polytopes of type $\{3,q,3\}$, which are also 
among the very few known examples of highly symmetric self-dual finite polytopes that do not admit a polarity. 
Also we show that if $p=3$ then $\mathcal{C}$ cannot have a higher degree of $s$-arc-transitivity than $\mathcal{G}$, and we present a family of regular $4$-polytopes 
of type $\{6,q,6\}$ for which the vertex-stabilisers in the automorphism group of $\mathcal{C}$ are larger than those for $\mathcal{G}$.} 
\end{abstract}

{\bf Keywords:} Abstract polytope, regular polytope, chiral polytope, medial graph

\section{Introduction}
\label{sec:intro} 

Abstract polytopes are combinatorial structures which mimic the general properties of classical geometric polytopes, such as the Platonic solids,
and yet need not be convex or have natural geometric realisations.   
An abstract polytope is simply a partially-ordered set, the elements of which are assigned a rank representing their dimension, 
and with incidence satisfying certain conditions that arise naturally in a geometric setting. 
    
A maximal chain in this poset is called a {\em flag}, and from the defining properties of an abstract polytope (given in Section~\ref{sec:polytopes}),
every automorphism of such a poset is uniquely determined by its effect on any given flag.  Some $n$-polytopes, called equivelar polytopes, can be assigned a type $\{p_1,\ldots,p_n\}$ that encodes the structure of the 2-sections of the polytope, which are isomorphic when they involve elements of the same ranks. 
    
Two important classes of equivelar polytopes consist of the regular polytopes, which possess the highest degree of symmetry (in that the automorphism group is transitive on the flags), and the chiral polytopes, which have maximal `rotational' symmetry but no reflectional symmetry (and so are not quite regular).  Many `classical' polytopes and other early examples of polytopes are regular, so regular polytopes have been studied extensively. A comprehensive description of regular polytopes is given in McMullen and Schulte's book \textit{Abstract Regular Polytopes}~\cite{McMullenSchulte}. On the other hand, chiral $n$-polytopes do not exist for $n = 2$, and are challenging to find for all except small values of $n \ge 3$, and cannot be convex. For these reasons, they are less well understood. Nevertheless much information about chiral polytopes can be found in a paper by Schulte and Weiss \cite{SchulteWeiss}. 
    
    
Following work by Monson and Weiss in \cite{MonsonWeiss-2008}, here we will investigate two graphs associated with finite and self-dual regular or chiral $4$-polytopes of type $\{p,q,p\}$.  Our investigation will involve some aspects of the study of finite  symmetric (arc-transitive) graphs, which we will describe in Section \ref{subsec:cubic graphs}. 
    
The first graph of interest here is the medial layer graph $\mathcal{G}$, which is constructed from the middle two layers of the Hasse diagram of such a polytope $\mathcal{P}$. This graph $\mathcal{G}$ is bipartite and $p$-valent, and by self-duality of $\mathcal{P}$, it admits automorphisms that swap its two parts. Medial layer graphs were first studied (on the abstract level) in \cite{MonsonWeiss-2007}, where it was shown that when $p=3$ the medial layer graph $\mathcal{G}$ is 3-arc-regular if $\mathcal{P}$ is regular, and 2-arc-regular if $\mathcal{P}$ is chiral.  As a partial converse,  it was also shown how to construct a group $G$ (called $\Gamma$ in \cite{MonsonWeiss-2007}) from a finite connected 2- or 3-arc-regular cubic graph, such that if the generators of $G$ satisfy certain conditions, then $G$ is isomorphic to the automorphism group of a finite self-dual regular or chiral polytope of type $\{3,q,3\}$.
    
The second graph of interest 
is a Cayley graph $\mathcal{C}$ that covers the medial layer graph, as constructed in \cite{MonsonWeiss-2008}. This construction is possible because  (properly) self-dual regular or chiral $4$-polytopes of type $\{p,q,p\}$ possess a polarity (a duality of order $2$) from which $p$ polarities can be created to form the Cayley group of $\mathcal{C}$. Monson and Weiss described in \cite{MonsonWeiss-2008} various connections between $\mathcal{G}$ and $\mathcal{C}$ in detail, and also gave several interesting examples of these graphs for $p=3$, $4$ and $5$. 
    
 In this paper we address some of the open questions raised by Monson and Weiss near the end of their 2008 paper \cite{MonsonWeiss-2008}.  
 In particular, in Section \ref{sec:ISD chiral 3q3} we exhibit the first known examples of chiral polytopes of type $\{3,q,3\}$ that do not possess a polarity that preserves the orbits of flags under the natural action of the automorphism group of the polytope.  (These are also two of the very few known examples of highly symmetric self-dual finite polytopes that do not admit any polarity.  No such polytopes were known at the time their existence was effectively questioned by Schulte and Weiss in their 1991 paper \cite{SchulteWeiss}.)  
In Section \ref{sec: arc transitivity of G and C} we prove that if $p=3$, and $\mathcal{G}$ is $t$-arc-transitive and $\mathcal{C}$ is $s$-arc-transitive, then $s\leq t$, 
and in Section \ref{sec: larger stabiliser} we present an infinite family of directly-regular polytopes of type $\{6,q,6\}$ for which the vertex-stabilisers in the automorphism group of the Cayley graph $\mathcal{C}$ are larger than those for the medial layer graph $\mathcal{G}$. 

Before all that, in Sections \ref{sec:ATgraphs},  \ref{sec:polytopes} and \ref{sec:mediallayergraphs},  we give relevant background on arc-transitive graphs, abstract polytopes 
(including regular and chiral polytopes) and their automorphism groups, and on medial layer graphs, 
and at the end we make some final remarks in Section~\ref{sec:final remarks}.  
More details are available in the second author's University of Auckland Masters thesis~\cite{Steinmann-thesis}.

\section{Arc-transitive graphs}
\label{sec:ATgraphs}

\subsection{Preliminary definitions}
\label{subsec:preliminaries}

All graphs in this paper are assumed to be finite, undirected and simple (with no loops or multiple edges). 

An \textit{arc} in a graph is an ordered edge, or equivalently, an ordered pair of adjacent vertices, 
and a graph is called \textit{arc-transitive} (or \textit{symmetric}) if its automorphism group is transitive on the set of all arcs.
More generally, an $s$\textit{-arc} in a graph is a sequence $(v_0,\ldots,v_s)$ of vertices  such that every two consecutive $v_i$ are adjacent, and every three consecutive $v_i$ are distinct. (In other words, an $s$-arc is a walk of length $s$ in which an edge is never immediately followed by the reverse of that edge.)  

A group $G$ of automorphisms of a graph $\Gamma$ is said to act $s$-arc-transitively on $\Gamma$ if it acts transitively on the set of all $s$-arcs in $\Gamma$, 
and $s$-\textit{arc-regularly} on $\Gamma$ if $G$ acts regularly (that is, sharply-transitively) on the $s$-arcs of $\Gamma$. 
The graph $\Gamma$ is then called $s$\textit{-arc-transitive} (respectively $s$-\textit{arc-regular}) if $\Aut(\Gamma)$ acts $s$-arc-transitively 
(resp. $s$-arc-regularly) on $\Gamma$. 
In particular, 0-arc-transitive means vertex-transitive, and 1-arc-transitive means arc-transitive (or symmetric). 

Note that while arc-transitivity implies edge-transitivity, the converse does not hold. 
Also an $s$-arc-transitive graph might not be $(s-1)$-arc-transitive:  the path graph on three vertices is edge-transitive and 2-arc-transitive, 
yet is neither vertex- nor arc-transitive. 
But an $s$-arc-transitive graph that is not a tree is $t$-arc-transitive whenever $0 \leq t \leq s$, and hence is vertex-transitive and therefore regular. 
Similarly, $s$-arc-regularity implies $s$-arc-transitivity, but the converse is not true:  
a graph might be $s$-arc-transitive for some $s$ but not be $t$-arc-regular for any $t$,  
as shown by the $2$-arc-transitive complete graph $K_5$.

    
    
\subsection{Symmetric cubic graphs}
\label{subsec:cubic graphs}

A focus of much of this paper will be on symmetric 3-valent graphs.  
These are particularly well understood, thanks to the pioneering work in the 1940s and 50s by Tutte, who proved the following theorem in \cite{Tutte}, 
bounding the order of a vertex-stabiliser: 
\begin{theorem}  \label{thm:tutte}
  Let $\Gamma$ be a connected finite symmetric cubic graph. 
  Then $\mathrm{Aut}(\Gamma)$ acts regularly on the set of $s$-arcs of $\Gamma,$ for some $s \in \{1,2,3,4,5\}$.
\end{theorem}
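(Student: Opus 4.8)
The plan is to fix $G=\Aut(\Gamma)$ and to prove the statement in three stages: first that there is a well-defined largest value $s$ for which $G$ is $s$-arc-transitive, then that for this maximal $s$ the action on $s$-arcs is actually \emph{regular}, and finally that $s\le 5$. Since $\Gamma$ is symmetric it is $1$-arc-transitive, and since it is a finite connected cubic graph, hence not a tree, the set of values $t$ for which $G$ is $t$-arc-transitive is downward closed, by the property recorded in Section~\ref{subsec:preliminaries}. So it suffices to study the maximal such $s$, whose finiteness will fall out of the order count below.

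The regularity claim I would establish by a \emph{sliding window} argument along a path, using only that the forward choice at each vertex is binary. Let $\alpha=(v_0,v_1,\dots,v_s)$ be an $s$-arc and let $G_\alpha$ be its pointwise stabiliser. Because $G$ is $s$- but not $(s+1)$-arc-transitive, $G_\alpha$ cannot act transitively on the two neighbours of $v_s$ other than $v_{s-1}$; a group acting non-transitively on a $2$-element set fixes both points, so $G_\alpha$ fixes every neighbour of $v_s$. Applying the same reasoning to the $s$-arc $(v_1,\dots,v_s,w)$ for each forward neighbour $w$ of $v_s$ shows that $G_\alpha$ fixes every neighbour of $w$, and iterating this breadth-first extension together with connectedness of $\Gamma$ forces $G_\alpha$ to fix every vertex. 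Hence $G_\alpha=1$, which is exactly regularity of $G$ on its $s$-arcs.

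With regularity in hand, I would pin down the order of a vertex-stabiliser by the standard index chain. Writing $G_i$ for the pointwise stabiliser of the initial segment $(v_0,\dots,v_i)$, arc-transitivity gives $|G_0:G_1|=3$ (transitivity on the three neighbours of $v_0$), while for $1\le i\le s-1$ the $(i\shortp1)$-arc-transitivity of $G$ gives $|G_i:G_{i+1}|=2$ (transitivity on the two forward neighbours of $v_i$). Since $G$ is finite this chain must terminate, so $s$ is finite, and $G_s=1$ by the previous step, whence
\[
  |G_{v_0}| \;=\; |G_0| \;=\; 3\cdot 2^{\,s-1}.
\]
Thus the theorem reduces to the single numerical bound $|G_{v_0}|\le 48$.

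The main obstacle, and the genuinely hard part of Tutte's theorem, is exactly this final bound $s\le 5$; everything above is bookkeeping by comparison. Here I would pass to the $2$-local analysis of the path-stabilisers. Writing $G_v^{[1]}$ for the subgroup fixing $v$ and all its neighbours, one compares it with the arc-stabiliser $G_1$, distinguishing whether the image of $G_{v_0}$ in $\Sym(\Gamma(v_0))$ is $C_3$ or $S_3$ in order to control the relevant indices, and then studies the pointwise stabilisers of balls of increasing radius. The crux is to show that this is a strictly decreasing chain of $2$-groups in which the available $2$-rank is exhausted after boundedly many halvings: each such stabiliser acts faithfully on the pairs of new forward neighbours it meets, and the transpositions it induces at distance $k$ are forced by relations coming from the vertices already fixed at smaller distance, so the process cannot survive past the radius corresponding to $s=5$. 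Making this precise — whether through Tutte's original counting argument or through its later reinterpretation via Goldschmidt's classification of the amalgam formed by $G_{v_0}$ and the edge-stabiliser — is where essentially all of the effort lies, and is the step I expect to absorb the bulk of the work.
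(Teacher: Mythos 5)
The paper does not actually prove this result: it is Tutte's theorem, quoted with a citation to \cite{Tutte}, so there is no internal argument to compare yours against. Your reduction is sound in outline: the downward closure of $t$-arc-transitivity, the observation that intransitivity of an $s$-arc stabiliser on its two forward extensions forces it to fix both of them, and the index chain giving $|G_{v_0}| = 3\cdot 2^{\,s-1}$ are all standard and essentially right. One repair is needed in the regularity step, though: ``breadth-first extension'' only propagates the fixed set forward from $v_s$. To fix the neighbours of $v_0$, and eventually every vertex, you must also use that the reverse of an $s$-arc fixed pointwise by $G_\alpha$ is again such an $s$-arc, and then prove that every $s$-arc of $\Gamma$ is reachable from $\alpha$ by a sequence of shifts and reversals (this is where connectedness and the hypothesis that the valency is at least $3$ genuinely enter). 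That lemma is easy but not automatic, and as written your induction does not reach the vertices behind $v_0$.

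The genuine gap is the bound $s \le 5$, which you correctly identify as the entire substance of the theorem and then do not prove. The paragraph offered in its place is not an argument: the assertions that ``the available $2$-rank is exhausted after boundedly many halvings'' and that ``the process cannot survive past the radius corresponding to $s=5$'' are exactly the statement to be established, restated in different words. A priori the arc-stabiliser $G_1$ is a $2$-group of order $2^{\,s-1}$ with no bound on its order or rank, and nothing you have written distinguishes $s=5$ from $s=6$; the number $5$ never actually emerges from your sketch. Every known proof of this bound --- Tutte's original counting argument in \cite{Tutte}, or the later amalgam analyses of Sims, Goldschmidt \cite{Goldschmidt} and Djokovi\'c--Miller \cite{DjokovicMiller} --- requires a substantial and delicate case analysis that cannot be compressed into the description you give. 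As it stands, the proposal establishes only that $\Aut(\Gamma)$ acts regularly on $s$-arcs for some finite $s \ge 1$, not that $s \le 5$.
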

In particular, the Orbit-Stabiliser Theorem gives $|\mathrm{Aut}(\Gamma)| = |V(\Gamma)|\,3\cdot2^{s-1}$ for some $s \le 5$, 
with vertex-stabiliser of order $3\cdot2^{s-1} = 3,6,12,24$ or $48$. 

Tutte's work was taken further by Djokovi\'{c} and Miller, who in \cite{DjokovicMiller} divided the collection of all connected finite symmetric cubic graphs 
into seven classes, depending on the value of $s$ and on the existence or otherwise of an involutory automorphism that reverses an arc. 
They proved that if $s=1,3$ or $5$, then such an automorphism always exists, but could not do the same when $s=2$ or $4$. 
Examples admitting no such involutory automorphism were constructed by Conder and Lorimer in  \cite{ConderLorimer}, 
thereby proving that all seven classes (one each for $s=1,3$ and $5$, and two each for $s = 2$ and $4$) are non-empty.  
These classes are now denoted by  $1$, $2^1$, $2^2$, $3$, $4^1$, $4^2$ and $5$,  
with $s$ or $s^1$ indicating the class of connected finite $s$-arc-regular cubic graphs admitting an involutory arc-reversing involution, 
and with $s^2$ indicating the class (for $s = 2$ or $4$) of such graphs admitting no such involutory automorphism. 
Sometimes these classes are denoted also by $1^{\prime}$, $2^{\prime}$,  $2^{\prime\prime}$,  $3^{\prime}$, $4^{\prime}$, $4^{\prime\prime}$ and $5^{\prime}$, 
respectively, in the obvious way. 

Moreover, Djokovi\'{c} and Miller showed in \cite{DjokovicMiller} that for each of the seven classes, the automorphism group 
of every graph $\Gamma$ in that class is a homomorphic image of particular `universal' group.  
This universal group is a free product $V *_A E$ of the pre-image $V$ of the stabiliser (in $\Aut(\Gamma)$)  
of a vertex $v$ and the pre-image $E$ of the stabiliser of an edge $e = \{v,w\}$ incident to $v$, with the pre-image $A$ of the stabiliser 
of the arc $(v,w)$ amalgamated.  The subgroups $V$, $E$ and $A$ are finite, and in each one of the seven classes, can be chosen independently of the given graph $\Gamma$. 
In fact $V$ is isomorphic to $C_3$, $S_3$, $S_3 \times C_2$, $S_4$ and $S_4 \times C_2$, for $s = 1,2,3,4$ and $5$, respectively, 
with $A$ being a Sylow $2$-subgroup (of index $3$) in each case, and $E$ contains $A$ as a subgroup of index $2$, together with
the pre-image of an arc-reversing automorphism of order $2$ or $4$. 

The seven universal groups were denoted by $A_1^{\prime}$, $A_2^{\prime}$,  $A_2^{\prime\prime}$,  $A_3^{\prime}$, 
$A_4^{\prime}$, $A_4^{\prime\prime}$ and $A_5^{\prime}$ in \cite{DjokovicMiller}, but we will denote them by 
$G_1$, $G_2^{\,1}$, $G_2^{\,2}$, $G_3$, $G_4^{\,1}$, $G_4^{\,2}$ and $G_5$, as in \cite{ConderLorimer}. 
Presentations for all seven groups are obtainable from \cite{DjokovicMiller}, and alternative (but equivalent) presentations for them are given in \cite{ConderLorimer}.

Furthermore, if $\theta$ is any smooth homomorphism from one of these universal groups $G_s$, $G_s^{\,1}$ or $G_s^{\,2}$ 
onto a finite group $G$, with `smooth' here meaning that $\theta$ preserves the orders of $V$, $E$ and $A$, 
then a connected finite symmetric cubic graph $\Gamma$ can be constructed with $G$ being an $s$-arc-regular subgroup of  $\Aut(\Gamma)$.  
Conditions ensuring that $G = \Aut(\Gamma)$ are given in \cite{ConderNedela}, based on observations made in any of \cite{ConderLorimer,DjokovicMiller,Goldschmidt}.  
The resulting correspondence between appropriate quotients of the universal group and members of the relevant class has enabled 
all graphs in each class to be determined up to  a certain order, extending the almost-complete collection of connected finite symmetric cubic graphs 
of order up to $512$ compiled by hand by Foster, now known as the Foster census \cite{FosterCensus}.   
With the help of various computational methods, this census was completed and also extended to order up to $768$ 
by Conder and Dobcs{\'a}nyi in \cite{ConderDobcsanyi}, and subsequently by the first author up to order $10000$ in \cite{Conder-SymmCubic10000}. 

The  Djokovi\'{c}-Miller classification was extended by Conder and Nedela  in \cite{ConderNedela}, showing that the collection of all 
finite symmetric cubic graphs can be divided into 17 classes, based on which types of arc-transitive group actions the graph admits. 
A subgroup of the automorphism group of such a graph $\Gamma$ may be said to act with type $s$ or $s^1$ if it acts regularly on the $s$-arcs 
of $\Gamma$ and contains an arc-reversing automorphism of order $2$, or with type $s^2$ if it so acts but without  an arc-reversing automorphism of order $2$.

For what follows later in this paper, there are two other important properties of finite symmetric cubic graphs that need to be mentioned.

\begin{proposition}\label{thm:2/3AR no 4/5AR}
If $G$ is an arc-transitive group of automorphisms of a connected finite symmetric cubic graph $\Gamma$ 
in class $4^1$, $4^2$ or $5$, then $G$ contains no subgroup acting with type $2^1$, $2^2$ or $3$. 
Equivalently, if the automorphism group of a finite symmetric cubic graph contains a $2$- or $3$-arc-regular subgroup, 
then it contains no $4$- or $5$-arc-regular subgroup.
\end{proposition}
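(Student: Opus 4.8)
The plan is to prove the (equivalent) second formulation directly: assuming $A=\Aut(\Gamma)$ is $s$-arc-regular with $s\in\{4,5\}$ (i.e.\ $\Gamma$ lies in class $4^1$, $4^2$ or $5$), I will show that $A$ has no $t$-arc-regular subgroup $H$ with $t\in\{2,3\}$; the first formulation then follows at once, since any subgroup $G\le A$ inherits the absence of such a subgroup. The whole argument will take place inside the arc-stabiliser $A_{(v,w)}$ for a fixed arc $(v,w)$, where I write $\Gamma(v)=\{w,w_2,w_3\}$ and $\Gamma(w)=\{v,a,b\}$. By the structure recalled above, $A_{(v,w)}$ is a Sylow $2$-subgroup of the vertex-stabiliser, so $A_{(v,w)}\cong D_4$ (dihedral of order $8$) when $A_v\cong S_4$, and $A_{(v,w)}\cong D_4\times C_2$ when $A_v\cong S_4\times C_2$.

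First I would record two facts about $A$. The neighbourhood-kernel $A_v^{[1]}$, fixing $v$ and every vertex of $\Gamma(v)$, equals the pointwise stabiliser of any $2$-arc with middle vertex $v$, and hence has order $2^{s-2}$; the same holds at $w$. Since $A$ is $2$-arc-transitive, the resulting action of $A_{(v,w)}$ on the far pair $\{w_2,w_3\}$ of $v$ is a surjection $\phi_v\colon A_{(v,w)}\to\Sym(\{w_2,w_3\})\cong C_2$ with kernel exactly $A_v^{[1]}$, and likewise the action on $\{a,b\}$ gives a surjection $\phi_w$ with kernel $A_w^{[1]}$. Thus an element of $A_{(v,w)}$ swaps the pair at $v$ exactly when it lies outside $A_v^{[1]}$, and swaps the pair at $w$ exactly when it lies outside $A_w^{[1]}$.

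Next I would extract from a hypothetical $t$-arc-regular subgroup $H\le A$ a single involution that swaps \emph{both} pairs. Because $H$ is $t$-arc-regular with $t\in\{2,3\}$, its local action $H_v\to\Sym(\Gamma(v))$ is onto $S_3$, its neighbourhood-kernel $H_v^{[1]}$ has order $2^{t-2}\le 2$, its arc-stabiliser $H_{(v,w)}$ is elementary abelian of order $2^{t-1}$, and the stabiliser in $H$ of a $3$-arc is trivial. A short check then shows that $\phi_v$ and $\phi_w$ restrict to surjections of $H_{(v,w)}$ with $\ker\phi_v|_H\cap\ker\phi_w|_H=H_v^{[1]}\cap H_w^{[1]}=1$ (as this intersection fixes the $3$-arc $(w_2,v,w,a)$ pointwise). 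Hence $(\phi_v,\phi_w)$ is injective on $H_{(v,w)}$, and since that group is elementary abelian there is an involution $g\in H_{(v,w)}\le A_{(v,w)}$ with $\phi_v(g)=\phi_w(g)=1$: an involution lying outside both $A_v^{[1]}$ and $A_w^{[1]}$.

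For $s=4$ this already gives a contradiction: in $A_{(v,w)}\cong D_4$ the two kernels $A_v^{[1]}$ and $A_w^{[1]}$ are precisely the two Klein four-subgroups, their intersection is the centre, and the elements avoiding both are exactly the two elements of order $4$, so no involution swaps both pairs. (In the sub-case $t=3$, $s=4$ there is an even quicker contradiction, since $H_v\cong S_3\times C_2$ would be an index-$2$ subgroup of $A_v\cong S_4$, whereas the only such subgroup is $A_4\not\cong S_3\times C_2$.) The main obstacle is $s=5$: here $A_{(v,w)}\cong D_4\times C_2$ does contain involutions swapping both pairs, of the form $(\text{reflection})\cdot z$ where $z$ is the central involution of $A_v\cong S_4\times C_2$, so the two-pair count no longer closes on its own. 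To finish this case I would pin down the action of $z$ on the second neighbourhood from the explicit presentation of the universal group $G_5$ in \cite{DjokovicMiller}, showing the extra involutions it produces cannot live in a subgroup whose local kernel $H_v^{[1]}$ has order only $2$; alternatively, the full statement for both $s=4$ and $s=5$ can be read off from the inclusion poset of the seven amalgams in the Conder--Nedela refinement \cite{ConderNedela}, in which the only proper sub-amalgams of $G_4^{\,1}$, $G_4^{\,2}$ and $G_5$ are conjugates of $G_1$ and, for $G_5$, of $G_4^{\,1}$, with none of $G_2^{\,1}$, $G_2^{\,2}$, $G_3$ occurring.
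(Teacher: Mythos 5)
Your overall framework is sound, and it is genuinely different from what the paper does: the paper gives no self-contained argument at all, but simply cites \cite[Corollary 2.2]{ConderNedela} and the observations in \cite{ConderLorimer,DjokovicMiller,Goldschmidt}. Your reduction to the existence of an involution $g\in H_{(v,w)}\le A_{(v,w)}$ lying outside both $A_v^{[1]}$ and $A_w^{[1]}$ is correct (the surjectivity of $\phi_v,\phi_w$, the identification of the kernels, and the triviality of the pointwise stabiliser of the $3$-arc $(w_2,v,w,a)$ all check out), and your disposal of the case $s=4$ is complete and correct.

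The genuine gap is the case $s=5$, which you explicitly do not finish: you defer either to an unspecified computation with the presentation of $G_5$ or to reading the answer off the Conder--Nedela amalgam poset, and the latter is essentially the citation the proposition is meant to replace. Worse, the reason you give for the case being hard is based on a miscalculation. You claim that $A_{(v,w)}\cong D_4\times C_2$ contains involutions, of the form $(\text{reflection})\cdot z$, lying outside both kernels. It does not. Since $A_v\cong S_4\times C_2$ has a unique normal subgroup of order $8$ with quotient $S_3$, namely $V_4\times C_2\cong C_2^{\,3}$, both $A_v^{[1]}$ and $A_w^{[1]}$ are elementary abelian index-$2$ subgroups of $A_{(v,w)}\cong D_4\times C_2$, and they are distinct because their intersection is the $3$-arc stabiliser, of order $2^{5-3}=4$. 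But writing $D_4\times C_2=\langle r,f\rangle\times\langle z\rangle$, of the seven maximal subgroups exactly two are elementary abelian, namely $\langle r^2,f,z\rangle$ and $\langle r^2,rf,z\rangle$; their union has $8+8-4=12$ elements and so contains all twelve elements of order at most $2$, the complement being the four elements $r,r^3,rz,r^3z$ of order $4$. In particular your candidate $fz$ lies in one of the two kernels, and \emph{every} involution of $A_{(v,w)}$ fixes at least one of the two pairs. So the very counting argument you used for $s=4$ closes the case $s=5$ as well; the proof is repairable by your own method, but as written the final case is not proved and the stated obstruction is spurious.
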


\begin{proof}
This follows from observations made in \cite{ConderLorimer, DjokovicMiller,Goldschmidt}; see \cite[Corollary 2.2]{ConderNedela}. 
\end{proof}  
 
For the other property, we give the presentations for $G_2^{\,1}$, $G_2^{\,2}$ and $G_3$ from \cite{ConderLorimer}: 

\noindent 
$G_2^{\,1} = \langle\, h,p,a \ | \ h^3=a^2=p^2=1$, \ $apa=p$, $php=h^{-1} \,\rangle$,  

\noindent 
$G_2^{\,2} = \langle\, h,p,a \ | \ h^3=p^2=1$, \ $a^2=p$, \ $php=h^{-1} \,\rangle$, \ and 

\noindent 
$G_3 = \langle\, h,p,q,a \ | \ h^3=a^2=p^2=q^2=1$, \ $apa=q$, \ $qp=pq$, \ $ph=hp$, \ $qhq=h^{-1} \,\rangle$. 

\noindent
In these cases $V = \langle h,p \rangle$, $\, \langle h,p \rangle$ and $\langle h,p,q \rangle$, respectively, 
with $A = \langle p \rangle$, $\, \langle p \rangle$ and $\langle p,q \rangle$, respectively, 
and with $a$ being the pre-image of an automorphism reversing the arc $(v,w)$.

\begin{proposition}\label{prop:2AR subgroup in 3AR subgroup}
Let $G$ be a finite group that acts regularly on the $2$-arcs of the connected cubic graph $\Gamma$. 
Then $\Gamma$ has a 3-arc-regular group of automorphisms containing $G$ if and only if there is a 
group automorphism $\theta: G \to G$ which centralises the image of $V$ in $G$, and takes the image of $a$ to the image of $ap$. 
\end{proposition}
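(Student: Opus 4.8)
The plan is to exploit the explicit relationship between the universal groups $G_2^{\,1}$, $G_2^{\,2}$ and $G_3$ recorded above, together with the standard coset-graph reconstruction of $\Gamma$. Since $G$ acts regularly on the $2$-arcs, the Djokovi\'{c}--Miller theory presents $G$ as a smooth quotient of $G_2^{\,1}$ or $G_2^{\,2}$; I would fix such an epimorphism and keep the names $h,p,a$ for the images of the generators, so that $V=\langle h,p\rangle$ is the vertex-stabiliser $G_v$, the arc-stabiliser $G_{vw}=\langle p\rangle$ has order $2$, and $a$ reverses the arc $(v,w)$. The crucial algebraic observation is that inside $G_3$ the central involution $p$ of $V_3=\langle h,p,q\rangle\cong S_3\times C_2$ stabilises setwise a natural copy of $G_2^{\,1}$ (namely $\langle h,pq,a\rangle$) or of $G_2^{\,2}$ (namely $\langle h,pa\rangle$), centralising its copy of $V$ and sending its arc-reverser to the product of itself with the image of $p$. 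This is exactly the prescription $\theta\colon a\mapsto ap$: the sought automorphism is conjugation by the extra central involution that distinguishes the type-$3$ vertex-stabiliser from the type-$2$ one.

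For the forward direction I would argue as follows. If $\tilde G\ge G$ acts $3$-arc-regularly, then counting arcs in the cubic graph $\Gamma$ (there are twice as many $3$-arcs as $2$-arcs) forces $|\tilde G|=2|G|$, so $G\trianglelefteq\tilde G$. The vertex-stabiliser $\tilde G_v\cong S_3\times C_2$ contains $G_v\cong S_3$ with index $2$, and its central involution $t$ lies outside $G$, since $G\cap\tilde G_v=G_v$ has trivial centre. Conjugation by $t$ then gives an automorphism $\theta$ of $G$, which centralises $V=G_v$ (as $t$ is central in $\tilde G_v$) and which, via any smooth epimorphism $G_3\to\tilde G$ carrying $p$ to $t$, realises the relation $\theta(a)=ap$ computed inside $G_3$ (the computation being uniform across both type-$2$ copies).

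For the converse I would realise the given $\theta$ geometrically. Writing $\Gamma$ as the coset graph $\mathrm{Cos}(G,G_v,G_v a\,G_v)$, the hypotheses $\theta(G_v)=G_v$ and $\theta(a)=ap\in G_v a\,G_v$ (valid since $p\in G_v$) guarantee that $\theta$ induces a graph automorphism $t$ of $\Gamma$, with conjugation by $t$ restoring $\theta$ on $G$. One first checks $\theta^2=\mathrm{id}$, since $\theta^2(a)=\theta(ap)=ap\cdot p=a$ and $\theta$ fixes $V$ pointwise; hence $t^2$ centralises the vertex-transitive group $G$ and fixes the base vertex, and as the centraliser of a transitive group is semiregular this yields $t^2=1$. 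A short computation ($apa^{-1}=p\in G_v$ in both types) shows $t$ also fixes the neighbour $w$, so $\tilde G=\langle G,t\rangle=G\rtimes\langle t\rangle$ has order $2|G|$ with $\tilde G_v=G_v\times\langle t\rangle\cong S_3\times C_2$ and $\tilde G_{vw}\cong C_2\times C_2$. Arc-transitivity of $G$ then makes $\tilde G$ arc-transitive with vertex-stabiliser of order $12=3\cdot 2^{2}$, so $\tilde G$ acts $3$-arc-regularly by Theorem~\ref{thm:tutte}.

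The hard part will be the converse: turning the abstract automorphism $\theta$ into a genuine element of $\Aut(\Gamma)$ acting on the \emph{same} graph, and then pinning down its order and its fixed vertices. The two delicate points are the semiregularity argument forcing $t^2=1$ and the verification that $t$ stabilises the arc $(v,w)$, so that the amalgam $(\tilde G_v,\tilde G_e,\tilde G_{vw})$ is exactly the type-$3$ amalgam; once these are in place, the order count together with Tutte's bound delivers $3$-arc-regularity with no further work.
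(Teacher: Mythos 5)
The paper does not actually prove this proposition: its ``proof'' is a one-line citation of \cite[Proposition 4.1]{ConderLorimer}, so what you have done is reconstruct that argument, and your reconstruction follows the standard route (conjugation by the central involution of the type-$3$ vertex-stabiliser in one direction, a coset-graph realisation of $\theta$ in the other) and is correct in its essentials. The converse is sound as written: $\theta^2=\mathrm{id}$ follows from $\theta(p)=p$ and $\theta(ap)=ap\cdot p=a$; the coset-graph criterion applies because $\theta(G_v)=G_v$ and $G_v(ap)G_v=G_vaG_v$; semiregularity of $C_{\Aut(\Gamma)}(G)$ kills $t^2$; and the order count $|G_v\times\langle t\rangle|=12$ together with arc-transitivity forces $3$-arc-regularity. (You should also record explicitly that $t\neq 1$ and $t\notin G$: both follow because $G$ acts faithfully while $\theta\neq\mathrm{id}$, and because an element of $G_v$ centralising $G_v\cong S_3$ is trivial.) The one step I would tighten is the end of the forward direction: deducing $\theta(a)=ap$ ``via any smooth epimorphism $G_3\to\tilde G$ carrying $p$ to $t$'' quietly assumes that $G$, with its \emph{already chosen} generators $h,p,a$, is the image of one of your two designated type-$2$ subgroups of $G_3$ with matching generators, which requires an argument (the arc-reverser of $G$ is only determined up to the arc-stabiliser, and the two type-$2$ subgroups of $G_3$ must be matched to the isomorphism type of $G$'s amalgam). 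A cleaner finish avoids $G_3$ entirely: since $t$ is central in $\tilde G_v\cong S_3\times C_2$ it lies in the kernel of the action on the three neighbours of $v$, so $t$ fixes $w$; hence $tat^{-1}$ lies in $G$ (by normality) and reverses the arc $(v,w)$, so $tat^{-1}\in G_{(v,w)}a=\{a,pa\}=\{a,ap\}$ (note $ap=pa$ in both type-$2$ amalgams); and $tat^{-1}=a$ would make $t$ centralise $\langle G_v,a\rangle=G$, forcing $t=1$ by semiregularity of the centraliser of a transitive group, a contradiction. With that repair the two delicate points you flagged at the end are exactly the right ones, and the argument matches the one in the source the paper cites.
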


\begin{proof}
See \cite[Proposition 4.1]{ConderLorimer}, in which (by abuse of notation) $h$, $a$, $p$ and $H$ are the images 
in $G$ of the generators $h$, $a$ and $p$ of $G_2^{\,1}$ or $G_2^{\,2}$ and the subgroup $V$.  
\end{proof}  

Next we describe some important background on regular and chiral polytopes, helpful for what follows.  
More detailed information can be found on them in \cite{McMullenSchulte} and \cite{SchulteWeiss}. 

\section{Abstract polytopes and their automorphisms}
\label{sec:polytopes}

\subsection{Basic definition and properties}
\label{sec:polytopebasics}

Abstract polytopes capture the combinatorial properties of classically defined and studied geometric polytopes. 
Their definition is somewhat complicated, but not difficult.

An {\em abstract $n$-polytope} (or {\em abstract polytope of rank\/} $n)$ is a partially ordered set $\mathcal{P}$, 
endowed with a strictly monotone rank function having range $\{-1,0,1,...,n\}$. 
The elements of $\mathcal{P}$ are called {\em faces}, with those of rank $j$ called {\em $j$-faces},  
and the faces of ranks $0,1$ and $n-1$ usually called the {\em vertices}, {\em edges\/} and {\em facets\/} 
of $\mathcal{P},$ respectively. 
The poset $\mathcal{P}$ must satisfy the following conditions, which ensure that abstract polytopes 
share many of the combinatorial properties frequently associated with the face lattices of convex polytopes.

First, $\mathcal{P}$ has a minimum face, $F_{-1}$, and a maximum face $F_n$, and each maximal chain 
(called a {\em flag\/}) contains exactly $n + 2$ faces. 
Two flags are said to be {\em adjacent} if they differ by just one face, and if they differ only 
in their $i$-faces, they are said to be {\em $i$-adjacent}. 
Also $\mathcal{P}$ must be {\em strongly flag connected}, which means that any two flags $\Phi$ and $\Psi$ of $\mathcal{P}$ 
can be linked by a sequence of flags $\Phi = \Phi_0, \Phi_1, \dots, \Phi_k = \Psi$, all containing $\Phi \cap \Psi$, 
such that each two successive flags $\Phi_{i-1}$ and $\Phi_i$ are adjacent. 
Finally, $\mathcal{P}$ must have a certain homogeneity property called the {\em diamond condition}, 
namely that whenever $F$ and $G$ are faces of ranks $j - 1$ and $j + 1$ for some $j$ with $F \le G$, 
there are exactly two faces $H$ of rank $j$ such that $F\leq H\leq G$.
 
Note that by the diamond condition, for each flag $\Phi$ of $\mathcal{P}$ and each $i \in \{0, \dots, n-1 \}$ 
there exists a unique flag that is $i$-adjacent to $\Phi$. We denote such a flag  by $\Phi^i$.
 
Next, if $F$ and $G$ are faces of $\mathcal{P}$  with $F \leq G$, then the set $\{H \in \mathcal{P}\mid F\leq H \leq G\,\}$ 
is called a {\em section\/} of $\mathcal{P}$, and denoted  by $G/F$. In particular, each facet $F_{n-1}$ 
may be viewed as the section $F_{n-1}/F_{-1} = \{H \in \mathcal{P} \mid H \leq F_{n-1}\,\}$.  
Also if $F_0$ is a vertex, then the section $F_n/F_0 = \{H \in \mathcal{P} \mid F_0\leq H\,\}$ is 
called the {\em vertex-figure of $\mathcal{P}\!$ at $F_0$}.  

Note that every section $G/F$ of $\mathcal{P}$ is again an abstract polytope.
In particular, every rank 2 section $G/F$ between an $(i \shortm 2)$-face $F$ 
and an incident $(i + 1)$-face $G$ of an abstract polytope $\mathcal{P}$ is isomorphic 
to the face lattice of a (possibly infinite) polygon. 
If for each $i$ the number of sides of this polygon is a constant, depending only on $i$ and not on $F$ or $G$, 
then we say that $\mathcal{P}$ is an {\em equivelar} polytope. 
Also we define the Schl\"afli type of an equivelar $n$-polytope $\mathcal{P}$ to be $\{p_1, p_2, \dots, p_{n-1}\}$, when
each section between an $(i-2)$-face and an $(i+1)$-face is an abstract $p_{i\,}$-gon. 
The Platonic solids are equivelar polytopes, and have Schl\"afli types $\{3,3\}$, $\{4,3\}$, $\{3,4\}$, $\{3,5\}$, and $\{5,3\}$.

\medskip
We now consider symmetries, dualities and polarities of abstract polytopes. 

An \textit{automorphism} of a polytope $\mathcal{P}$ is an order- and rank-preserving bijection from $\mathcal{P}$ to $\mathcal{P}$, 
and the group of all automorphisms of $\mathcal{P}$ is denoted by $\Aut(\mathcal{P})$, or often by $\Gamma(\mathcal{P})$. 
Note that by the diamond condition and the strongly flag connected property, every automorphism is uniquely determined 
by its effect on any given flag.  

A \textit{duality} $\delta$ is a bijection between two polytopes $\mathcal{P}$ and $\mathcal{Q}$ which preserves incidence but reverses order, so that $F\leq G$ in $\mathcal{P}$ if and only if $F^\delta \geq G^\delta$ in $\mathcal{Q}$. If such a bijection exists, then $\mathcal{P}$ and $\mathcal{Q}$ are \textit{duals} of each other. Every polytope $\mathcal{P}$ has a unique \textit{dual} $\mathcal{P}^*$, and if $\mathcal{P} \cong \mathcal{P}^*$, we say that $\mathcal{P}$ is \textit{self-dual}. If $\mathcal{P}$ is equivelar with type $\{p_1,\ldots,p_{n-1} \}$, then $\mathcal{P}^*$  is equivelar with type $\{p_{n-1},\ldots,p_1\}$, so if $\mathcal{P}$ is self-dual, then $p_i=p_{n-i}$ for $1 \leq i < n$.

Some self-dual polytopes admit a \textit{polarity} (a duality of order 2). 
In such cases we call the group of all automorphisms and dualities of $\mathcal{P}$ the \textit{extended group} $\overline{\Gamma}(\mathcal{P})$ of $\mathcal{P}$, 
and this contains the automorphism group $\Gamma(\mathcal{P})$ as a subgroup of index 2. 
Also if such a polytope $\mathcal{P}$ admits a polarity that preserves the orbit of every flag of $\mathcal{P}$ under $\Gamma(\mathcal{P})$, 
then $\mathcal{P}$ is said to be \textit{properly self-dual}, while otherwise it is called \textit{improperly self-dual}. 
Note that if $\mathcal{P}$ is properly self-dual, then all of its dualities preserve the flag orbits.
    
\medskip
In the next few subsections we describe two important classes of highly symmetric polytopes, namely regular and chiral polytopes.

\subsection{Regular polytopes}
\label{subsec:regular polytopes}

An abstract $n$-polytope $\mathcal{P}$ is called \textit{regular} if its automorphism group $\Gamma(\mathcal{P})$ acts transitively on the flags of  $\mathcal{P}$.  
As every automorphism is uniquely determined by its effect on any given flag, this definition implies that regular polytopes are 
precisely those whose automorphism group has maximum possible order (equal to the number of flags).

Now let $\mathcal{P}$ be any regular $n$-polytope.
Then every section of $\mathcal{P}$ is a regular polytope, with automorphism group a subgroup of $\Gamma(\mathcal{P})$. 
Moreover, any two sections of $\mathcal{P}$ defined by faces of the same ranks are isomorphic, and so $\mathcal{P}$ is equivelar. 
Let $\{p_1,\ldots,p_{n-1}\}$ be its type, and also set some flag $\Phi = \{F_{-1},F_0,\ldots,F_n \}\in \mathcal{F}(\mathcal{P})$ 
as the \textit{base flag} of $\mathcal{P}$.

As $\Gamma(\mathcal{P})$ acts regularly on the flags of $\mathcal{P}$, it follows that for $i=0,1,\ldots,n-1$ we can define $\rho_i$ to be 
the unique automorphism sending the base flag $\Phi$ to its $i$-adjacent flag $\Phi^i$. 
The resulting $n$ automorphisms are involutions, and are often referred to as `reflections' of the polytope $\mathcal{P}$, 
in the same way that a reflection of the $3$-cube about an edge $e = \{v,w\}$ fixes the vertex $v$ and the edge $e$ 
while interchanging the two faces incident with $e$.

Moreover, as shown in \cite{McMullenSchulte}, the $n$ involutions $ \rho_0,\ldots,\rho_{n-1}$ generate the automorphism group of $\mathcal{P}$, 
and satisfy (at least) the Coxeter relations\\[-12pt] 
    \begin{equation}
       \rho_i^2 = 1 \ \text{ for }  0 \le i < n \quad  \text{and} \quad (\rho_i\rho_j)^{m_{ij}}=1 \ \text{where} \ 
        m_{ij} =    \begin{cases} 
                        1 & \hbox{if }\  i=j \\
                        2 & \hbox{if }\  | i-j | \geq 2 \quad {}\\
                        p_i & \hbox{if }\  j=i+1
                    \end{cases}
        \label{equ:relns of rhos}
    \end{equation}
as well as the following \textit{intersection conditions}:\\[-8pt] 
    \begin{equation}
        \langle \, \rho_i \mid i \in I \, \rangle \cap \langle \, \rho_i \mid i \in J \, \rangle     = \langle \, \rho_i \mid i \in I \cap J \, \rangle \quad
        \text{for all }I,J \subseteq \{0,\ldots,n-1\}. \quad 
        \label{equ:int_conds_reg}
    \end{equation}
    
For later use, it is also helpful to understand the stabilisers of faces of $\mathcal{P}$ in $\Gamma(\mathcal{P})$. 
As is also shown in \cite{McMullenSchulte}, the stabiliser in $\Gamma(\mathcal{P})$ of the $i$-face $F_i$ of the base flag $\Phi$ of $\mathcal{P}$ is \\[-8pt] 
    \begin{equation}
      \Gamma(\mathcal{P})_{F_i} = \langle \, \rho_j  :  0 \le j < n \mid  j\ne i  \, \rangle. \qquad
        \label{equ:stabs_reg}
    \end{equation}
    
\smallskip
Next, for $i=1,\ldots,n-1$ we define $ \sigma_i = \rho_{i-1}\rho_i$, and call this a `rotation',  
as $\sigma_i$ fixes the faces in $\Phi \setminus \{F_{i-1},F_i\}$ and cyclically permutes both the $i$-faces and the $(i-1)$-faces in the section $F_{i+1} / F_{i-2}$. 
These $n-1$ rotations generate the \textit{rotation subgroup} $\Gamma^+ (\mathcal{P})$ of $\Gamma(\mathcal{P})$, which has index at most 2 in $\Gamma(\mathcal{P})$,  
 and it follows from Equation (\ref{equ:relns of rhos}) that they satisfy the following relations:
    \begin{equation}
        \sigma_j^{p_j} = 1 \ \hbox{ for } 1 \leq j \leq n-1 
         \quad  \hbox{and} \quad 
        (\sigma_j \sigma_{j+1} \ldots \sigma_k )^2 = 1 \ \hbox{ for } 1 \leq j<k\leq n-1.    \ 
        \label{equ:sigma_relns}
    \end{equation}

We say that $\mathcal{P}$ is \textit{directly-regular} if the index of $\Gamma(\mathcal{P})^+$ in $\Gamma(\mathcal{P})$ is 2, and \textit{non-orientably regular} otherwise. 
All `classical' regular polytopes (such as polygons and the Platonic solids) are directly-regular, while the hemi-cube (obtained by identifying antipodally opposite pairs of vertices of the cube) is non-orientably regular. 

Finally, note that a regular polytope $\mathcal{P}$ is self-dual if and only if there exists a (unique) polarity $\delta\in \overline{\Gamma}(\mathcal{P})$ 
that preserves the base flag $\Phi$ but reverses all of its faces. Such a polarity induces an involutory group automorphism of $\Gamma(\mathcal{P})$ 
taking $\rho_j$ to $\rho_{n-1-j}$ for all $j$, and this implies that $\overline{\Gamma}(\mathcal{P})$ is isomorphic to the semi-direct product $\Gamma(\mathcal{P}) \rtimes C_2$. 

\subsection{Chiral polytopes}
\label{subsec:chiral polytopes}

An abstract $n$-polytope $\mathcal{P}$ is said to be \textit{chiral} if its automorphism group $\Gamma(\mathcal{P})$ has two flag orbits, 
with two adjacent flags always lying in different orbits.
Such polytopes have maximal rotational symmetry but admit no reflections, and come in pairs, 
with  each member of a pair being a `mirror image' of the other (as the name suggests). 

Chiral polytopes are neither as easy to find nor as easy to study as regular polytopes.  
In particular, there are no finite chiral polytopes in Euclidean 3-space, and no convex chiral polytopes.
There are plenty of rank $3$ (viewable also as orientably-regular but chiral maps), but it becomes more challenging to 
construct examples as the rank increases. 

Indeed for quite some time, the only known finite examples of chiral polytopes had ranks $3$ and $4$, but just over 15 years ago, 
some finite examples of rank $5$ were constructed by Conder, Hubard and Pisanski \cite{ConderHubardPisanski}, 
and these were followed by others of ranks $5$ to $8$ by Conder and Devillers (in unpublished work). 
Soon after, Pellicer \cite{Pellicer} proved the existence of finite chiral polytopes of every rank $d\geq3$.

Pellicer's proof involved a clever construction, but this produced examples that could have `extremely large' order. 
More recently, Conder, Hubard and O'Reilly-Regueiro \cite{ConderHubardOReilly-Regueiro} showed how to construct concrete examples 
of chiral $n$-polytopes of type $\{3,3,\dots,3,k\}$ for some $k$, as extensions of regular simplices, 
with all but finitely many alternating and symmetric groups as their automorphism groups (for every integer $n \ge 5$). 
Meanwhile Cunningham \cite{Cunningham} gave an explanation for the large orders, proving that for $n \ge 8$, 
every chiral $n$-polytope has at least $48(n-2)(n-2)!$ flags. 

\smallskip
We now give some of the basic theory and properties of chiral polytopes, as can be found in more detail in \cite{McMullenSchulte} and \cite{SchulteWeiss}. 

\smallskip
Let $\mathcal{P}$ be any chiral $n$-polytope, where $n\geq 3$. Then for any given base flag $\Phi=\{F_{-1},\ldots,F_n\}$ of $\mathcal{P}$, 
there exist automorphisms $ \sigma_1, \ldots, \sigma_{n-1}$ of $\mathcal{P}$ such that each $\sigma_i$ fixes the faces in $\Phi \setminus \{F_{i-1},F_i \}$, 
and cyclically permutes the $i$-faces and the $(i-1)$-faces in the section $F_{i+1} / F_{i-2}$. 
Moreover, these automorphisms generate $\Gamma(\mathcal{P})$, and can be chosen so that $\sigma_i$ takes $\Phi$ to $(\Phi^{i})^{i-1}$, for $1 \le i < n$, 
and also  $\mathcal{P}$ is equivelar of type $\{p_1,\ldots,p_{n-1}\}$, where each $p_i$ is the order of the `rotation' $\sigma_i$. 
Again in this case the rotation group $\Gamma(\mathcal{P})^+$ of $\mathcal{P}$ is the subgroup generated by $\sigma_1, \ldots, \sigma_{n-1}$, 
and so $\Gamma(\mathcal{P})^+=\Gamma(\mathcal{P})$.
    
These $n-1$ rotations satisfy the relations in (\ref{equ:sigma_relns}), and they also satisfy certain intersection conditions 
analogous to those in (\ref{equ:int_conds_reg}), as follows. For $0 \leq i \leq j \leq n$, define
    \begin{equation*} 
    \tau_{i,j} =
        \begin{cases}
            1 & \text{if $\ 0 = i < j\ $ or $\ i < j = n$,} \\
            \sigma_i & \text{if $\ 0<i=j<n$,}\\
            \sigma_i \sigma_{i+1} \ldots \sigma_{j-1} \sigma_j & \text{otherwise.}
        \end{cases} 
    \end{equation*}
    Then for all subsets $I$ and $J$ of $ \{-1,0,\ldots,n\}$, these automorphisms satisfy 
    \begin{equation}
        \langle \, \tau_{s+1,t} \mid s,t\in I \, \rangle \cap 
        \langle \, \tau_{s+1,t} \mid s,t \in J \, \rangle =
        \langle \, \tau_{s+1,t} \mid s,t \in I \cap J \, \rangle. \quad {}
        \label{equ:int_conds_chi}
    \end{equation}

It is also easy to prove that every section of a chiral polytope is itself a regular or chiral polytope.

\smallskip
The stabiliser in $\Gamma(\mathcal{P})$ of the face $F_i$ of the chosen flag $\Phi$ of $\mathcal{P}$ is given by
    \begin{equation}
    \Gamma(\mathcal{P})_{F_i}= 
        \begin{cases}
           \langle \sigma_2,\sigma_3,\ldots,\sigma_{n-1} \rangle & \hbox{for }\,i=0, \\
           \langle \{ \sigma_j \mid j \not\in \{i,i+1\} \} \cup \{ \sigma_i \sigma_{i+1} \} \rangle & \hbox{for }\,1 \le i \le n-2, \\
           \langle \sigma_1,\sigma_2,\ldots, \sigma_{n-2} \rangle & \hbox{for }\,i=n-1.
        \end{cases}
    \label{equ:stabs_chi}
    \end{equation}

Each of these coincides (in its description) with the stabiliser of the corresponding face in the rotation subgroup for a regular polytope. 
In fact there is quite a close relationship between the structure of the rotation subgroup of a directly-regular polytope (with $\Gamma(\mathcal{P})^+ \ne \Gamma(\mathcal{P})$), 
and the automorphism group of a chiral polytope, in that each group is generated by the `rotations' $\sigma_i$.  The principle difference is the existence or otherwise of reflections, 
as summarised in the following theorem from \cite{SchulteWeiss}, in which the effect of the group automorphism $\varphi$ on the generators $\sigma_i$ is the same as conjugation by  $\rho_0$ in the regular case: 
    
\begin{theorem}
\label{thm:whenchiral}
If $A$ is a finite group generated by $n-1$ elements $\sigma_1,\ldots,\sigma_{n-1}$ which satisfy the relations in {\em (\ref{equ:sigma_relns})} and intersection conditions in {\em (\ref{equ:int_conds_chi})}, then $A$ is isomorphic to the rotation subgroup $\Gamma(\mathcal{P})^+$ of some chiral or directly-regular polytope $\mathcal{P}$. Moreover, $\mathcal{P}$ is directly-regular if and only if there exists an involutary group automorphism $\varphi: A \to A$ taking $\sigma_1$ to $\sigma_1^{-1}$, 
and $\sigma_2$ to $\sigma_1^{\,2}\sigma_2$, and $\sigma_i$ to $\sigma_i$ for $3 \le i < n$. 
\end{theorem}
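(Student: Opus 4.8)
The plan is to realise $A$ as the rotation subgroup of a polytope built directly from its subgroup structure, by means of a coset construction, and then to detect regularity through the existence of a single involutory automorphism that plays the role of conjugation by the reflection $\rho_0$. First I would define, for each $i \in \{0,1,\ldots,n-1\}$, a candidate face-stabiliser subgroup $A_i \le A$ using the formulas in (\ref{equ:stabs_chi}): namely $A_0 = \langle \sigma_2,\ldots,\sigma_{n-1}\rangle$, $A_{n-1} = \langle \sigma_1,\ldots,\sigma_{n-2}\rangle$, and $A_i = \langle \{\sigma_j : j \notin \{i,i+1\}\} \cup \{\sigma_i\sigma_{i+1}\}\rangle$ for $1 \le i \le n-2$. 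The $i$-faces of $\mathcal{P}$ are then declared to be the right cosets $A_i g$ with $g \in A$, together with formal faces $F_{-1}$ and $F_n$ of ranks $-1$ and $n$; and two faces $A_i g$ and $A_j h$ with $i \le j$ are incident precisely when $A_i g \cap A_j h \neq \emptyset$. The group $A$ acts on this poset by right multiplication, fixing $F_{-1}$ and $F_n$, and I would take the base flag to consist of the cosets $A_i$ together with $F_{-1}$ and $F_n$.

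The heart of the argument, and the step I expect to be hardest, is verifying that this poset is a genuine abstract polytope. The rank function and the existence of $F_{-1}$, $F_n$ are immediate. That the rank-$2$ section between an $(i-2)$-face and an incident $(i+1)$-face is a $p_i$-gon follows from the relation $\sigma_i^{p_i}=1$ in (\ref{equ:sigma_relns}), which forces the cycle of intermediate faces to have length exactly $p_i$; this simultaneously yields the diamond condition and equivelarity of type $\{p_1,\ldots,p_{n-1}\}$. The delicate point is that the incidence relation behaves correctly across all ranks, and here the intersection conditions (\ref{equ:int_conds_chi}) are exactly what is needed: they guarantee that $A_i g \cap A_j h$ is either empty or a single coset of $A_i \cap A_j$, so that incidence is well-defined and the poset acquires the flag structure of a polytope. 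Strong flag-connectivity follows from the fact that the $\sigma_i$ generate $A$, since connectivity of the flag graph translates into generation of $A$ by the face-stabilisers of the base flag. With these verifications in place, $A$ acts on $\mathcal{P}$ as a group of automorphisms preserving a natural two-colouring of the flags (adjacent flags receiving distinct colours), and a faithfulness check, again via (\ref{equ:int_conds_chi}), identifies $A$ with the rotation subgroup $\Gamma(\mathcal{P})^+$; hence $\mathcal{P}$ is either chiral or directly-regular according to whether this two-colouring can be reversed by a further automorphism.

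It remains to establish the dichotomy. For the forward implication, suppose $\mathcal{P}$ is directly-regular, so that $A = \Gamma(\mathcal{P})^+$ has index $2$ in $\Gamma(\mathcal{P}) = \langle \rho_0,\ldots,\rho_{n-1}\rangle$ with $\sigma_i = \rho_{i-1}\rho_i$. Taking $\varphi$ to be conjugation by $\rho_0$, a direct computation using the relations in (\ref{equ:relns of rhos}) gives $\varphi(\sigma_1) = \rho_0(\rho_0\rho_1)\rho_0 = \rho_1\rho_0 = \sigma_1^{-1}$, and, since $\rho_0$ commutes with $\rho_j$ for $j \ge 2$, also $\varphi(\sigma_i) = \sigma_i$ for $3 \le i < n$ and $\varphi(\sigma_2) = \rho_0\rho_1\rho_2\rho_0 = \rho_0\rho_1\rho_0\rho_2 = \sigma_1^{\,2}\sigma_2$; this $\varphi$ is clearly involutory. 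For the converse, given such a $\varphi$, I would adjoin to $A$ a new involution $\rho_0$ acting on $A$ as $\varphi$, forming the split extension $A \rtimes \langle \rho_0\rangle$, and then set $\rho_i = \rho_0\sigma_1\sigma_2\cdots\sigma_i$ for $1 \le i < n$. The prescribed action of $\varphi$ is precisely what makes each $\rho_i$ an involution: for instance $\rho_1^{\,2} = \rho_0\sigma_1\rho_0\sigma_1 = \varphi(\sigma_1)\sigma_1 = 1$, and $\rho_2^{\,2} = \varphi(\sigma_1\sigma_2)\sigma_1\sigma_2 = (\sigma_1\sigma_2)^2 = 1$ by (\ref{equ:sigma_relns}), with the general case following by induction. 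One then checks that these involutions satisfy the string Coxeter relations (\ref{equ:relns of rhos}) and the intersection conditions (\ref{equ:int_conds_reg}), so that $\langle \rho_0,\ldots,\rho_{n-1}\rangle$ is the automorphism group of a regular polytope having $A$ as its rotation subgroup of index $2$; thus $\mathcal{P}$ is directly-regular, completing the proof.
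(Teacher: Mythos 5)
This theorem is stated in the paper as a quotation from Schulte and Weiss \cite{SchulteWeiss} and is given no proof there, so there is no in-paper argument to compare against; your proposal correctly reconstructs the standard coset-geometry proof from that reference, taking essentially the intended approach. The face-stabiliser/coset construction, the role of the intersection conditions (\ref{equ:int_conds_chi}), and both directions of the regularity criterion (conjugation by $\rho_0$ for the forward direction, and the split extension $A \rtimes \langle \varphi \rangle$ with $\rho_i = \rho_0\sigma_1\cdots\sigma_i$ for the converse) all check out; the only slight imprecision is that the diamond condition concerns rank-$1$ sections having exactly two middle faces (an index-two computation in the relevant stabilisers), which is separate from the observation that the rank-$2$ sections are $p_i$-gons.
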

    
Finally in this subsection, we consider self-duality.
Every self-dual regular polytope is properly self-dual, but in contrast, chiral polytopes can be properly or improperly self-dual, as defined in Subsection \ref{sec:polytopebasics}. 
By a pair of theorems in \cite{HubardWeiss}, the kind of self-duality is determined by the following theorem. 

 \begin{theorem}        
Let $\mathcal{P}$ be a chiral polytope with distinguished generators $\{\sigma_1,\ldots,\sigma_{n-1}\}$ with respect to its base flag $\Phi$. 
Then \\[+4pt] 
{\rm (a)} $\mathcal{P}$ is properly self-dual if and only if there exists an involutory automorphism of $\Gamma(\mathcal{P})$ 
      \\ ${}$ \quad \ taking $\sigma_i$ to $\sigma_{n-i}^{-1}$ for all $i \in \{1,\ldots,n-1\}$, while  \\[+4pt] 
{\rm (b)} $\mathcal{P}$ is improperly self-dual if and only if there exists a group automorphism of $\Gamma(\mathcal{P})$ 
      \\ ${}$ \quad \ taking $\sigma_i$ to $\sigma_{n-i}^{-1}$ for $1 \le i \le n-3$, and $\sigma_{n-2}$ to $\sigma_1 \sigma_2 \sigma_1^{-1}$, and $\sigma_{n-1}$ to  $\sigma_1$.

\label{thm:ISD or PSD chiral pol}
\end{theorem}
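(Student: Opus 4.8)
The plan is to realise every self-duality of $\mathcal{P}$ as a group automorphism of $\Gamma(\mathcal{P})$ obtained by conjugation, and conversely to reconstruct a self-duality from such a group automorphism using the face-stabiliser description in (\ref{equ:stabs_chi}). For the forward direction, suppose $\mathcal{P}$ is self-dual and let $\delta$ be a self-duality. Since $\delta$ reverses order while every element of $\Gamma(\mathcal{P})$ preserves it, conjugation by $\delta$ is an order-preserving bijection, hence a group automorphism of $\Gamma(\mathcal{P})$. After the normalisation described below I would replace $\delta$ by an adjusted duality $d$ and write $\theta\colon\gamma\mapsto d^{-1}\gamma d$ for the corresponding conjugation; the task is then to compute $\theta$ on the distinguished generators $\sigma_1,\dots,\sigma_{n-1}$.

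First I would normalise $\delta$. As $\delta$ sends a $j$-face to an $(n\shortm1\shortm j)$-face, the chain $\delta(\Phi)$ is again a flag, and I would compose $\delta$ with a suitable element of $\Gamma(\mathcal{P})$ so that the adjusted duality $d$ carries $\Phi$ to a controlled flag; this is possible precisely because $\Gamma(\mathcal{P})$ has only two flag orbits, and the two cases of the theorem correspond to whether $d$ fixes the orbit of $\Phi$ or interchanges the two orbits. The key combinatorial input is that $d$ interchanges $j$-adjacency with $(n\shortm1\shortm j)$-adjacency and reverses the sense of each rotation, since order-reversal reverses the cyclic orientation of every polygonal section. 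Recalling that $\sigma_i$ sends $\Phi$ to $(\Phi^{i})^{i-1}$, a direct computation on the base flag then yields $\theta(\sigma_i)=\sigma_{n-i}^{-1}$.

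In the properly self-dual case, $d$ may be taken to fix $\Phi$ with $d(F_j)=F_{n\shortm1\shortm j}$; then $d^2$ is an automorphism fixing $\Phi$ pointwise, so $d^2=1$ by uniqueness of automorphisms on a flag, making $d$ a polarity and $\theta$ involutory, which gives part (a). In the improperly self-dual case $d$ must send $\Phi$ into the opposite flag orbit, so it cannot fix $\Phi$; retracing the base-flag computation with this shifted image is what produces the twisted values $\theta(\sigma_{n-2})=\sigma_1\sigma_2\sigma_1^{-1}$ and $\theta(\sigma_{n-1})=\sigma_1$ at the top two generators, while the remaining $\sigma_i$ still map to $\sigma_{n-i}^{-1}$. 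Pinning down exactly which flag $\Phi$ maps to, and checking that the orbit-swap forces precisely these two modifications and no others, is the most delicate bookkeeping in the argument and is the main obstacle.

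For the converse I would build the self-duality from the given $\theta$ using (\ref{equ:stabs_chi}): the $i$-faces of $\mathcal{P}$ are identified with the cosets of the stabiliser $\Gamma(\mathcal{P})_{F_i}$, so it suffices to verify that $\theta$ maps $\Gamma(\mathcal{P})_{F_i}$ onto $\Gamma(\mathcal{P})_{F_{n-1-i}}$ for every $i$; this is a short check for each of the three shapes of stabiliser in (\ref{equ:stabs_chi}), and it holds for both the formula in (a) and the modified formula in (b). The induced coset bijection reverses ranks and inclusions, hence defines a duality, and the reconstruction of $\mathcal{P}$ from its rotation group as in Theorem~\ref{thm:whenchiral} guarantees this map is genuinely an order-reversing automorphism of the poset. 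Finally, in case (a) the relation $\theta^2=1$ forces the constructed duality to be an involution, so it is a polarity preserving the flag orbits and $\mathcal{P}$ is properly self-dual, whereas in case (b) the twisted generators make the duality interchange the two flag orbits, so no orbit-preserving polarity arises and $\mathcal{P}$ is improperly self-dual.
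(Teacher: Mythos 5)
First, a point of comparison: the paper does not prove Theorem~\ref{thm:ISD or PSD chiral pol} at all --- it is imported from Hubard and Weiss~\cite{HubardWeiss} (``By a pair of theorems in [HubardWeiss]\dots''), so there is no in-paper argument to measure yours against. Your outline does follow the architecture of the standard proof: realise a duality $d$ as a group automorphism of $\Gamma(\mathcal{P})$ by conjugation, normalise $d$ by composing with an automorphism so that $d(\Phi)$ is the reverse of a controlled flag, split into cases according to whether that flag lies in the orbit of $\Phi$ or in the other orbit, and for the converse rebuild the duality from the coset model of the faces via the stabilisers in~(\ref{equ:stabs_chi}). Part (a) and the stabiliser checks in the converse are essentially complete as you describe them; the induced coset bijection respects non-empty intersections automatically, since $\theta$ is a bijection of the group carrying cosets of $\Gamma(\mathcal{P})_{F_i}$ onto cosets of $\Gamma(\mathcal{P})_{F_{n-1-i}}$.

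The genuine gap is exactly where you flag it: part (b). The whole content distinguishing (b) from (a) is the pair of twisted images $\sigma_{n-2}\mapsto\sigma_1\sigma_2\sigma_1^{-1}$ and $\sigma_{n-1}\mapsto\sigma_1$, and you assert rather than derive them, deferring the derivation as ``the main obstacle.'' To close it you must commit to a specific normalisation --- for instance, take $d$ so that $d(\Phi)$ is the reverse of an adjacent flag such as $\Phi^{n-1}$, which lies in the second orbit --- and then actually trace $\sigma_i\colon\Phi\mapsto(\Phi^{i})^{i-1}$ through the adjacency reversal $j\leftrightarrow n-1-j$ to see that exactly the top two generators are perturbed and in exactly this way. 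Without that computation you have shown neither that an automorphism of shape (b) exists when $\mathcal{P}$ is improperly self-dual, nor that in the converse direction the duality built from (b) must swap the two flag orbits (which is what lets you conclude ``improperly'' rather than merely ``self-dual''). For that last point a cleaner observation helps: any two dualities differ by an element of $\Gamma(\mathcal{P})$, which preserves each flag orbit, so either every duality preserves the orbits or every duality swaps them; hence it suffices to check the orbit behaviour of the single normalised $d$ --- but that single check is still the missing step.
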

    

\subsection{Constructing directly-regular or chiral polytopes}
\label{subsec:constructingpolytopes}

There are many methods for constructing directly-regular or chiral polytopes, and we briefly describe just three of them here.

The first method, which we use to construct improperly self-dual chiral polytopes in Section \ref{sec:ISD chiral 3q3},
takes quotients of the orientation-preserving subgroup (of index $2$) in an infinite string Coxeter group, as was done in \cite{ConderHubardPisanski}. 

A {\em Coxeter group} is a finitely-presented group with generators $x_1,\ldots,x_n$ subject to the relations $(x_i x_j)^{m_{ij}} = 1$ for $1\le i \le j \le n$, 
with $m_{i,i}=1$ (so that $x_i^{\,2} = 1$) for all $i$, and with $m_{i,j} \geq 2$ whenever $i \neq j$. 
If also $m_{i,j}=2$ whenever $| i-j | \geq 2$, then this is called a \textit{string Coxeter group}; and if $p_i=m_{i,i+1}$ for $1 \le i < n$,  
then the group is often denoted by $[p_1,\ldots,p_{n-1}]$, and illustrated by the Coxeter-Dynkin diagram in Figure \ref{fig:dynkin_donuts}.

    \begin{figure}[h]
        \centering
        \includegraphics[width=0.45\textwidth]{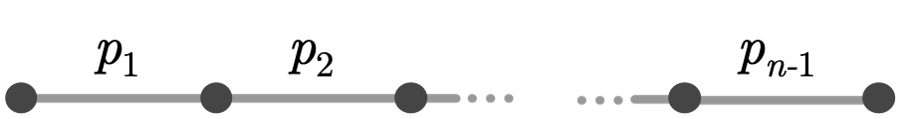}
        \caption{Coxeter-Dynkin diagram for the string Coxeter group $[p_1,\ldots,p_{n-1}]$.}
        \label{fig:dynkin_donuts}
    \end{figure}

Now let $\Lambda$ be the string Coxeter group  $[p_1,\ldots,p_{n-1}]$, and let $\Lambda^+$ be its `rotation subgroup', 
namely the index $2$ subgroup consisting of all elements of $\Lambda$ expressible as words of even length in the canonical generators of $\Lambda$. 

Then the automorphism group $\Gamma(\mathcal{P})$ of every regular polytope $\mathcal{P}$ with Schl{\"a}fli type $\{p_1,p_2,\dots,p_{n-1}\}$ 
is a homomorphic image of $\Lambda$, with its rotation group $\Gamma(\mathcal{P})^+$ being the image of $\Lambda^+$ .
Conversely, if $A$ is any smooth homomorphic image of $\Lambda$ (meaning that the orders of the images of the canonical generators $x_i$ of $\Lambda$ 
and their pairwise products $x_i x_j$ are preserved), and the images $\rho_{i-1}$ of the $x_i$ satisfy the intersection conditions in (\ref{equ:int_conds_reg}), 
then there exists some regular $n$-polytope $\mathcal{P}$ with Schl{\"a}fli type $\{p_1,p_2,\dots,p_{n-1}\}$ such that $A$ is isomorphic to $\Gamma(\mathcal{P})$.

Similarly, the automorphism group $\Gamma(\mathcal{P})$ of every chiral polytope $\mathcal{P}$ with Schl{\"a}fli type $\{p_1,p_2,\dots,p_{n-1}\}$ 
is a homomorphic image of $\Lambda^+$, and conversely, if $A$ is any smooth homomorphic image of $\Lambda^+$, and the images $\sigma_i$ 
of the generators $x_i x_{i+1}$ 
satisfy the intersection conditions given 
in subsection~\ref{subsec:chiral polytopes} (in terms of the alternative generators $\tau_{i,j}$), then there exists some chiral 
or directly-regular $n$-polytope $\mathcal{P}$ with Schl{\"a}fli type $\{p_1,p_2,\dots,p_{n-1}\}$ such that $A$ is isomorphic to $\Gamma(\mathcal{P})^+$.
The polytope $\mathcal{P}$ is directly-regular if the epimorphism $\theta$ from $\Lambda^+$ to $A$ extends to an epimorphism from $\Lambda$ 
to $\Gamma(\mathcal{P})$ as in Theorem \ref{thm:whenchiral}, and otherwise it is chiral.  In the latter case, $\ker\theta$ is normal in $\Lambda^+$ but not in $\Lambda$. 

The command \texttt{LowIndexNormalSubgroups} in {\sc Magma} can be used to find all normal subgroups of up to a given index (within limits) 
in a finitely-presented group, such as a string Coxeter group $\Lambda$ or its rotation group $\Lambda^+$, 
and hence to find the automorphism groups of all regular and chiral polytopes of small rank with up to a given number of flags.  
Such a process uses checks on the intersection conditions and the extendability of the normality (or otherwise) of the normal subgroup of $\Lambda^+,$  
as done previously by the first author to produce the lists in \cite{Conder-Chiral,Conder-Regular}. 

Once the automorphism group $\Gamma(\mathcal{P})$ of a regular or chiral polytope $\mathcal{P}$ is found, we can use it to construct the polytope itself. 
This can be achieved by identifying the $j$-faces of $\mathcal{P}$ with the right cosets of $\Gamma(\mathcal{P})_{F_j}$ in $\Gamma(\mathcal{P})$ for all $j$, 
with the help of (\ref{equ:stabs_reg}) or (\ref{equ:stabs_chi}), and then the partial ordering can be found via non-empty intersection of cosets; see \cite{MonsonWeiss-2007}. 
  
\medskip
The second method involves using connected finite 2- or 3-arc-regular cubic graphs to construct 4-polytopes of type $\{3,q,3\}$. 
This is described in detail in \cite{MonsonWeiss-2007}, and we give only a brief summary for the  3-arc-regular case, with the 2-arc-regular case being similar. 

If the connected cubic graph $\Gamma $ is 3-arc-regular, then for each vertex $v$ there exists a unique non-trivial automorphism which fixes $v$ and its three neighbours.  
(In fact this automorphism is the image of the generator $p$ in the universal group $G_3$ for 3-arc-regular cubic graphs given in subsection \ref{subsec:cubic graphs}.)  

Now let $\rho_2$, $\rho_0$, $\rho_3$ and $\rho_1$ be such automorphisms for the four vertices in a given 3-arc $(v_0,v_1,v_2,v_3)$ of $\Gamma$, 
and let $\delta$ be the unique involutory automorphism of $\Gamma$ that reverses this 3-arc. 
If $\rho_0,$ $\rho_1$, $\rho_2$ and $\rho_3$ satisfy the intersection conditions in (\ref{equ:int_conds_reg}), then 
the group generated by these four automorphisms and $\delta$ is isomorphic to the extended automorphism group of a regular polytope $\mathcal{P}$ of type $\{3,q,3\}$. 
(Moreover, the medial layer graph (as defined in the next section) of this polytope $\mathcal{P}$ is isomorphic to either $\Gamma$ or the canonical double cover of $\Gamma$, depending on whether or not $\Gamma$ is bipartite.) 

\medskip
The third method constructs regular or chiral polytopes as covers of smaller polytopes. 

For example, if $\mathcal{Q}$ is a non-orientably regular $n$-polytope, then there exists a directly-regular $n$-polytope $\mathcal{P}$ 
that is the orientable double cover of $\mathcal{Q}$, constructible as follows. 
The rotation group $\Gamma(\mathcal{Q})^+$ of $\mathcal{Q}$ is the epimorphic image of the rotation subgroup $\Lambda^+$ of some string Coxeter group $\Lambda$, 
and by non-orientability $\Gamma(\mathcal{Q})^+$ is equal to $\Gamma(\mathcal{Q})$, and hence contains images of the $n$ reflecting generators of $\Lambda$. 
Now if $\alpha$ is the involutory automorphism of $\Gamma(\mathcal{Q})^+$ induced by conjugation by the image of one such generator, 
then the semi-direct product $\Gamma(\mathcal{Q})^+ \rtimes \langle \alpha \rangle$ is the automorphism group of a directly-regular polytope $\mathcal{P}$ 
with twice as many flags as $\mathcal{Q}$, and with $\Gamma(\mathcal{P})^+ \cong \Gamma(\mathcal{Q})^+ = \Gamma(\mathcal{Q})$.
    
Another approach takes a regular or chiral $n$-polytope $\mathcal{P}$, with automorphism group $\Gamma(\mathcal{Q})$ 
obtainable as the quotient of some string Coxeter group $\Lambda$ or its rotation subgroup $\Lambda^+$ by a normal subgroup $J$, 
and then constructs a larger regular or chiral $n$-polytope $\mathcal{Q}$ that covers $\mathcal{P}$, via some normal subgroup $K$ of 
$\Lambda$ or its rotation subgroup $\Lambda^+$ such that $K$ is contained in $J$.  
This approach was taken successfully to find chiral covers of chiral polytopes in~\cite{ConderZhang}, 
and to find chiral covers of regular polytopes in~\cite{Zhang}, with abelian covering group $J/K$ in each case.

\section{Medial layer graphs}
\label{sec:mediallayergraphs}

If $\mathcal{P}$ is a polytope with even rank $n$, then the two `middle' layers of the Hasse diagram of $\mathcal{P}$ form the \textit{medial layer graph} $\mathcal{G}$ of $\mathcal{P}$. The vertices of this graph are the $(\frac{n-2}{2})$-faces and $(\frac{n}{2})$-faces of $\mathcal{P}$, with two vertices being adjacent whenever the corresponding two faces are incident in $\mathcal{P}$. 
Obviously this graph  $\mathcal{G}$ is bipartite, and if $\mathcal{P}$ is equivelar with type $\{p_1,\ldots,p_{n-1}\}$, 
then the vertices of the two parts of  $\mathcal{G}$ have valencies $p_{\frac{n-2}{2}}$ and $p_{\frac{n+2}{2}}$.  
    
For the rest of this section, we will focus on equivelar 4-polytopes of type $\{p,q,p\}$ that are either regular and self-dual, or chiral and properly self-dual. 

Let $\mathcal{P}$ be such a polytope, and let $\Phi=\{F_{-1},F_0,F_1,F_2,F_3,F_4\}$ be its base flag. 
Note that every  2-face of $\mathcal{P}$ is a $p$-gon, and there are $p$ of these around each 1-face of $\mathcal{P}$, 
 and in every 3-face of $\mathcal{P}$, there are $q$ $p$-gons around each 0-face.
    
Because $\mathcal{P}$ is regular or chiral, its rotation group $\Gamma(\mathcal{P})^+$ is generated by $\sigma_1$, $\sigma_2$ and $\sigma_3$, 
satisfying the relations $\sigma_1^{\,p}=\sigma_2^{\,q}=\sigma_3^{\,p}=(\sigma_1\sigma_2)^2=(\sigma_2\sigma_3)^2=(\sigma_1\sigma_2\sigma_3)^2=1$  
given by (\ref{equ:sigma_relns}). Moreover, as shown by Schulte and Weiss in \cite{SchulteWeiss}, the intersection conditions in (\ref{equ:int_conds_chi}) 
can be replaced simply by 
    \begin{equation}
        \langle \sigma_1 \rangle \cap \langle \sigma_2 \rangle = \{1\},  \quad
        \langle \sigma_2 \rangle \cap \langle \sigma_3 \rangle = \{1\} \quad \hbox{and} \quad 
        \langle \sigma_1,\sigma_2 \rangle \cap \langle \sigma_2,\sigma_3 \rangle = \langle \sigma_2 \rangle.
        \label{equ:int_conds_chi_n=4}
    \end{equation}

If $\mathcal{P}$ is properly self-dual, admitting a polarity $\delta$ that reverses the base flag $\Phi$,  
then $\delta$ satisfies the relations $\, \delta^2 = 1, \  \delta^{-1} \sigma_1 \delta = \sigma_3^{-1}\,$ and $\,\delta^{-1} \sigma_2 \delta = \sigma_2^{-1}$,  
and we may regard $\overline{\Gamma}(\mathcal{P})$ as a subgroup of $\Aut(\mathcal{G})$.

\medskip
Next, following an approach taken by Monson and Weiss in \cite{MonsonWeiss-2008}, we may construct a covering graph $\mathcal{C}$ of $\mathcal{G}$,  
which is a Cayley graph on the group $G$ generated by the $p$ polarities of $\mathcal{P}$ defined by setting $\,\delta_j = \sigma_1^{\,1-j} \delta \sigma_1^{\,j-1}\,$ for $1\leq j \leq p$.

Note that these $p$ polarities are distinct, as they take the $2$-face $F_2$ of $\Phi$ to the $p$ distinct 1-faces in the section $F_3/F_0$.
Some interactions between them and the rotations $\sigma_i$ are given in the following Lemma from \cite{MonsonWeiss-2008}, 
with subscripts treated mod $p$:  
    \begin{lemma} ${}$\\[+6pt] 
    \begin{tabular}{lll} 
        {\rm (a)} $\,\delta_{j+k}=\sigma_1^{-k}\delta_j\sigma_1^{\,k};$ 
        &  {\rm (b)} $\,\delta_j\delta_k = \sigma_1^{\,1-j}\sigma_3^{\,k-j}\sigma_1^{\,k-1};$ \\
        {\rm (c)} $\,\sigma_2^{\,2}=\delta_2\delta_3\delta_2\delta_1;$ 
        &  {\rm (d)} $\,\sigma_3\delta_j\delta_3^{-1}= \delta_1\delta_2\delta_{j+1}\delta_2\delta_1 ;$ \\
        {\rm (e)} $\,(\sigma_2\sigma_3)^{-1} \delta_j \sigma_2\sigma_3 = \delta_{3-j};$
        & {\rm (f)} $\,\sigma_2^{-1}\delta_j\sigma_2=\delta_1\delta_2\delta_{4-j}\delta_2\delta_1\,$ {\rm and} $\,\sigma_2\delta_j\sigma_2^{-1}= \delta_2\delta_3\delta_{4-j}\delta_3\delta_2$.
        \end{tabular}
        \label{lem:deltas_lemma}
    \end{lemma}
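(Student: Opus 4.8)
The plan is to treat each of the six identities (a)--(f) as a relation in the extended group $\overline{\Gamma}(\mathcal{P})$ and to verify it by rewriting both sides as words in $\sigma_1,\sigma_2,\sigma_3$ and $\delta$, reducing with the defining relations. The only ingredients needed are the rotation-group relations $\sigma_1^{\,p}=\sigma_2^{\,q}=\sigma_3^{\,p}=(\sigma_1\sigma_2)^2=(\sigma_2\sigma_3)^2=(\sigma_1\sigma_2\sigma_3)^2=1$ together with the polarity relations $\delta^2=1$, $\delta\sigma_1\delta=\sigma_3^{-1}$ and $\delta\sigma_2\delta=\sigma_2^{-1}$; conjugating the first of these by $\delta$ yields the derived rule $\delta\sigma_3\delta=\sigma_1^{-1}$, so that $\delta$ interchanges $\sigma_1$ and $\sigma_3$ up to inversion and inverts $\sigma_2$. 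I would record these four conjugation rules at the outset, since every subsequent step is simply an application of them.

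First I would dispose of (a) and (b), which are immediate from the definition $\delta_j=\sigma_1^{\,1-j}\delta\sigma_1^{\,j-1}$: for (a), one has $\sigma_1^{-k}\delta_j\sigma_1^{\,k}=\sigma_1^{\,1-(j+k)}\delta\sigma_1^{\,(j+k)-1}=\delta_{j+k}$, and for (b), $\delta_j\delta_k=\sigma_1^{\,1-j}\delta\sigma_1^{\,j-k}\delta\sigma_1^{\,k-1}$, where the middle collapses via $\delta\sigma_1^{\,j-k}\delta=(\delta\sigma_1\delta)^{\,j-k}=\sigma_3^{\,k-j}$. Identity (b) is the workhorse of the whole lemma: it converts any product of two of the polarities into a short word in $\sigma_1$ and $\sigma_3$, and I would use it to rewrite every $\delta$-word appearing on the right-hand sides of (c), (d) and (f).

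With (b) in hand, (c) becomes a pure rotation-group statement: expanding its right-hand side gives $\delta_2\delta_3\delta_2\delta_1=\sigma_1^{-1}\sigma_3\sigma_1\sigma_3^{-1}$, so (c) is equivalent to $\sigma_2^{\,2}=\sigma_1^{-1}\sigma_3\sigma_1\sigma_3^{-1}$, which I would derive from the three ``mixed'' relations $(\sigma_1\sigma_2)^2=(\sigma_2\sigma_3)^2=(\sigma_1\sigma_2\sigma_3)^2=1$ alone. This single relation is really the crux, because the conjugation formulas (d)--(f) all collapse onto it. For (e), the point is that $w=\sigma_2\sigma_3$ is an involution with $w\sigma_1w=\sigma_1^{-1}$ (an immediate consequence of $(\sigma_2\sigma_3)^2=(\sigma_1\sigma_2\sigma_3)^2=1$); conjugating $\delta_j=\sigma_1^{\,1-j}\delta\sigma_1^{\,j-1}$ by $w$ therefore inverts every $\sigma_1$-power, and after using the conjugation rules to clear $\delta$ the residual $\sigma$-word reduces to exactly the relation $\sigma_2^{\,2}=\sigma_1^{-1}\sigma_3\sigma_1\sigma_3^{-1}$, leaving $\delta_{3-j}$. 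Identities (d) and (f) move a rotation ($\sigma_3$, respectively $\sigma_2^{\pm1}$) past $\delta_j$ in the same fashion: push the rotation through $\delta$ with the conjugation rules, then re-express the outcome in the $\delta_k$ using (a) and (b); the two halves of (f) are simply the two directions of conjugation by $\sigma_2$ and are handled identically.

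The conceptual content here is light, since every step is an invocation of a relation, so the main obstacle is purely organisational: keeping the subscripts correct modulo $p$ and the signs of the $\sigma_1$-exponents straight through the conjugations in (d), (e) and (f), and then recognising that the leftover rotation word is in each case a rearrangement of $\sigma_2^{\,2}=\sigma_1^{-1}\sigma_3\sigma_1\sigma_3^{-1}$. I would therefore prove that one relation cleanly first and quote it repeatedly, rather than re-deriving it inside each of (d), (e) and (f).
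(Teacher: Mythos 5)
The paper gives no proof of this lemma at all --- it is quoted directly from Monson and Weiss \cite{MonsonWeiss-2008} --- so there is no in-paper argument to compare against; your direct verification is correct and is exactly the computation one would expect. In particular, (a) and (b) are immediate from $\delta_j=\sigma_1^{\,1-j}\delta\sigma_1^{\,j-1}$, $\delta^2=1$ and $\delta\sigma_1\delta=\sigma_3^{-1}$; the relation $\sigma_2^{\,2}=\sigma_1^{-1}\sigma_3\sigma_1\sigma_3^{-1}$ does follow from the three mixed relations (e.g.\ $\sigma_1^{-1}\sigma_3\sigma_1\sigma_3^{-1}=(\sigma_1^{-1}\sigma_2^{-1}\sigma_1^{-1})(\sigma_3^{-1}\sigma_2^{-1}\sigma_3^{-1})=\sigma_2\cdot\sigma_2$) and, as you say, it is the one nontrivial input to (c), (e) and (f), while (d) collapses via (a) and (b) to the tautology $\sigma_3\delta_j\sigma_3^{-1}=\sigma_3(\sigma_1\delta_{j+1}\sigma_1^{-1})\sigma_3^{-1}$. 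One detail worth stating explicitly: as printed, (d) reads $\sigma_3\delta_j\delta_3^{-1}$, which cannot be correct (the left side would be an automorphism and the right side a product of five dualities); it must be read as $\sigma_3\delta_j\sigma_3^{-1}$, which is the reading your argument implicitly adopts and is what the subsequent normality claim for $G=\langle\delta_1,\dots,\delta_p\rangle$ requires.
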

    
\noindent    
In particular, the group $G$ generated by the polarities $\delta_1,\delta_2,\ldots,\delta_p$ is a normal subgroup of the 
extended rotation group $\overline{\Gamma}(\mathcal{P})^+ = \langle \sigma_1,\sigma_2,\sigma_3,\delta \, \rangle$. 
The relationships between these and other subgroups of $\Aut(\mathcal{G})$ associated with $\mathcal{P}$ are illustrated in Figure \ref{fig:relns of subgroups}.
    \smallskip
    
    \begin{figure}[h]
        \centering
        \includegraphics[width=0.40\textwidth]{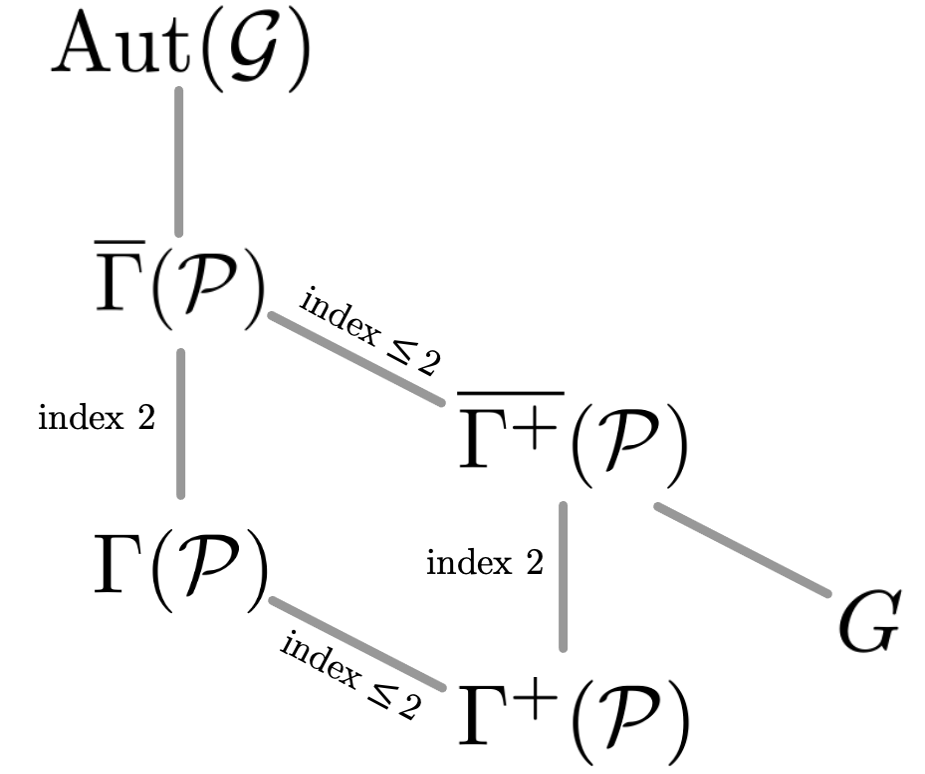}
        \caption{Inclusions between certain subgroups of $\mathrm{Aut}(\mathcal{G})$.}
        \label{fig:relns of subgroups}
    \end{figure}
                
    Now let $\mathcal{C}$ be the Cayley graph for $G$ with generating set $\{\delta_1,\ldots,\delta_p\}$, so that two vertices $g$ and $h$ are adjacent in $\mathcal{C}$ if and only if $gh^{-1}=\delta_i$ for some $i\in \{1,\ldots,p\}$. This graph is a connected symmetric $p$-valent  cover of $\mathcal{G}$, with the Cayley group $G$ acting transitively (indeed regularly) on its vertices, so every vertex of $\mathcal{G}$ can be written in the form $F_{2\hskip 1pt}g$ for some $g \in G$. 
   Also let $m=| G \cap \Gamma(\mathcal{P})^+_{F_2} |$, and recall that from equations (\ref{equ:stabs_reg}) and (\ref{equ:stabs_chi}) that the stabiliser of $F_2$ in $\Gamma(\mathcal{P})^+$ is given by
    \begin{equation}
    \Gamma(\mathcal{P})^+_{F_2}= \begin{cases}
        \langle \sigma_1,\sigma_2\sigma_3 \rangle & \text{if $\mathcal{P}$ is directly-regular or chiral, or} \\
        \langle \, \rho_0,\rho_1,\rho_3 \, \rangle & \text{if $\mathcal{P}$ is non-orientably regular.}
                                \end{cases}
    \end{equation}
    The first possibility $\langle \sigma_1,\sigma_2\sigma_3 \rangle$ is isomorphic to the dihedral group $D_p$ of order $2p$, 
    since $\sigma_1$ has order $p$ and both $\sigma_2\sigma_3$ and $\sigma_1\sigma_2\sigma_3$ are involutions.
    For the second possibility, where $\mathcal{P}$ is non-orientably regular, the subgroup of $\Gamma(\mathcal{P})$ generated by $\rho_0$ and $\rho_1$ is dihedral of order $2p$, 
    because the section $F_2/F_{-1}$ is a regular $p$-gon; see \cite[Proposition 2B9]{McMullenSchulte}. 
   Then since $\rho_3$ commutes with $\rho_0$ and $\rho_1$, it follows that 
    $\langle \, \rho_0,\rho_1,\rho_3 \, \rangle \cong D_p \times C_2$, of order $4p$. 
    
Next, we expand on the relationship between $\mathcal{C}$ and $\mathcal{G}$ by noting observations made in \cite{MonsonWeiss-2008}.  
Clearly there exists a surjective graph homomorphism $\nu\!: \mathcal{C} \to \mathcal{G}$ taking $g$ to $F_{2\hskip 1pt}g$ for all $g \in G$, 
and this is injective on the neighbourhood of every vertex of~$\mathcal{C}$. Moreover, if $\mathcal{P}$ is chiral or directly-regular, then $\mathcal{C}$ is a $s$-fold covering of $\mathcal{G}$, where $s=| G \cap \Gamma(\mathcal{P})^+_{F_2} |$, while if $\mathcal{P}$ is non-orientably regular, then $\mathcal{C}$ is a $2s$-fold covering of $\mathcal{G}$. 
Vertices $g$ and $h$ of $\mathcal{C}$ are taken to the same vertex in $\mathcal{G}$ under $\nu$ if and only if $gh^{-1} \in G \cap \Gamma(\mathcal{P})_{F_2}$. Also $1\leq s\leq 4p$, since $\Gamma(\mathcal{P})^+_{F_2}$ has order $2p$ when $\mathcal{P}$ is directly-regular or chiral, and order $4p$ when $\mathcal{P}$ is non-orientably regular. 

\medskip
Finally, we have the following important theorem from \cite[Theorems 2 and 5]{MonsonWeiss-2007}, which exhibits the  relationship between 
the properties of $\mathcal{P}$ and $\mathcal{G}$ when $p = 3$ (and $\mathcal{G}$ is cubic): 
    \begin{theorem}
    Let $\mathcal{P}$ be a finite self-dual regular or chiral polytope, with type $\{3,q,3\}$. 
    Then $\mathrm{Aut}(\mathcal{G})$ is isomorphic to the extended group $\overline{\Gamma}(\mathcal{P})$ of $\mathcal{P}$, 
     and the medial layer graph $\mathcal{G}$ is $3$-arc-regular when $\mathcal{P}$ is regular, and $2$-arc-regular when $\mathcal{P}$ is chiral.
    \label{thm:p=3 s-AR of medial layer graph}
    \end{theorem}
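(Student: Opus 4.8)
The plan is to prove the two assertions—that $\Aut(\mathcal{G}) \cong \overline{\Gamma}(\mathcal{P})$, and that $\mathcal{G}$ is $3$-arc-regular (regular case) or $2$-arc-regular (chiral case)—by establishing the second assertion first and using Tutte's classification (Theorem~\ref{thm:tutte}) to pin down the size of $\Aut(\mathcal{G})$. First I would record the containment $\overline{\Gamma}(\mathcal{P}) \le \Aut(\mathcal{G})$: the automorphisms and dualities of $\mathcal{P}$ preserve incidence between the middle two face-ranks, and by self-duality the polarity $\delta$ swaps the two parts of the bipartition, so $\overline{\Gamma}(\mathcal{P})$ really does act as graph automorphisms of the cubic graph $\mathcal{G}$. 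Since $\mathcal{P}$ is regular or chiral, $\Gamma(\mathcal{P})$ is transitive on flags (regular case) or has exactly two flag orbits interchanged by any $1$-adjacency (chiral case), and in either case $\overline{\Gamma}(\mathcal{P})$ is transitive on the arcs of $\mathcal{G}$, so $\mathcal{G}$ is a connected finite symmetric cubic graph and Tutte applies.

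Next I would compute the order of a vertex-stabiliser in $\overline{\Gamma}(\mathcal{P})$ directly, so as to read off the value of $s$ in Tutte's theorem. A vertex of $\mathcal{G}$ corresponds to a $1$-face or $2$-face of $\mathcal{P}$; taking the base $2$-face $F_2$, its stabiliser in $\Gamma(\mathcal{P})^+$ is $\langle \sigma_1, \sigma_2\sigma_3\rangle \cong D_3$ of order $6$, as given in the displayed formula for $\Gamma(\mathcal{P})^+_{F_2}$. In the regular case the full extended vertex-stabiliser picks up further automorphisms (the reflections $\rho_0,\rho_1,\rho_3$ fixing $F_2$, together with the contribution of the polarity fixing $F_2$), bringing the stabiliser order up to $12$, which matches $3\cdot 2^{s-1}$ with $s=3$; in the chiral case there are no reflections, the stabiliser order is $6 = 3\cdot 2^{s-1}$ with $s=2$. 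I would verify these stabiliser orders carefully using the explicit generator relations and the intersection conditions in~(\ref{equ:int_conds_chi_n=4}), since this is where the regular/chiral dichotomy enters. Because the stabiliser acts faithfully (an automorphism fixing a flag is trivial) and regularly on $s$-arcs at that vertex, the action is $s$-arc-\emph{regular} rather than merely transitive for the subgroup $\overline{\Gamma}(\mathcal{P})$.

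To upgrade from ``$\overline{\Gamma}(\mathcal{P})$ acts $s$-arc-regularly'' to ``$\Aut(\mathcal{G}) \cong \overline{\Gamma}(\mathcal{P})$ and $\mathcal{G}$ is $s$-arc-regular,'' I would argue that $\overline{\Gamma}(\mathcal{P})$ is in fact the whole automorphism group. Since $\overline{\Gamma}(\mathcal{P}) \le \Aut(\mathcal{G})$ and both are finite, it suffices to show the vertex-stabiliser cannot grow any larger, i.e. that $\mathcal{G}$ is not $t$-arc-regular for any $t>s$ in $\Aut(\mathcal{G})$. Here Tutte bounds $s\le 5$, and I would appeal to Proposition~\ref{thm:2/3AR no 4/5AR}: the presence of a $2$- or $3$-arc-regular subgroup (namely $\overline{\Gamma}(\mathcal{P})$) forbids any $4$- or $5$-arc-regular group of automorphisms, ruling out $t=4,5$. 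This leaves showing that a $2$-arc-regular chiral $\mathcal{G}$ does not secretly admit a larger ($3$-arc) action and a $3$-arc-regular regular $\mathcal{G}$ is genuinely $3$-arc-regular rather than sitting inside something bigger; the $3$-arc-regular case already forces equality with $\Aut(\mathcal{G})$ given Proposition~\ref{thm:2/3AR no 4/5AR}, while for the chiral case one must rule out the existence of an arc-reversing involution and an extra automorphism, which is exactly what Proposition~\ref{prop:2AR subgroup in 3AR subgroup} controls.

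The main obstacle will be the last step in the chiral case: showing that the $2$-arc-regular action of $\overline{\Gamma}(\mathcal{P})$ is all of $\Aut(\mathcal{G})$, rather than an index-$2$ subgroup of a $3$-arc-regular group. The clean way to block this is to observe that a chiral polytope admits \emph{no} reflection, so $\mathcal{G}$ carries no arc-reversing involution fixing the relevant vertex-figure structure; translated through Proposition~\ref{prop:2AR subgroup in 3AR subgroup}, the required group automorphism $\theta$ centralising the image of $V$ and sending $a\mapsto ap$ would manufacture precisely such a reflection of $\mathcal{P}$, contradicting chirality. I expect that the bulk of the genuine work lies in making this correspondence between the combinatorial symmetries of $\mathcal{P}$ and the Djokovi\'c--Miller generators $h,p,q,a$ fully explicit, so that the abstract graph-theoretic obstructions can be identified with the concrete statement that $\mathcal{P}$ is chiral rather than regular.
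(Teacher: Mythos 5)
Your overall strategy is correct, and in fact there is no in-paper proof to compare it with: the paper imports this result verbatim from Theorems 2 and 5 of \cite{MonsonWeiss-2007}. Your plan for the only genuinely delicate step --- excluding a $3$-arc-regular overgroup when $\mathcal{P}$ is chiral by feeding the hypothetical automorphism of Proposition~\ref{prop:2AR subgroup in 3AR subgroup} back into the polytope --- is precisely the technique the paper itself uses in Section~\ref{sec: arc transitivity of G and C} for the Cayley graph $\mathcal{C}$, where the automorphism $\theta$ is shown to induce the reflection-like map $(\sigma_1,\sigma_2,\sigma_3)\mapsto(\sigma_1,\sigma_2\sigma_3^{\,2},\sigma_3^{-1})$, contradicting chirality; so your route is consistent with the authors' methods and with the original proof.

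Two points need repair before the argument is watertight. First, no duality can fix a vertex of $\mathcal{G}$, because every duality interchanges the $1$-faces and the $2$-faces and hence the two sides of the bipartition; so ``the contribution of the polarity fixing $F_2$'' is vacuous, and the order-$12$ vertex-stabiliser in the regular case must come entirely from $\Gamma(\mathcal{P})_{F_2}=\langle\rho_0,\rho_1,\rho_3\rangle\cong D_3\times C_2$. If you genuinely adjoined a polarity to that stabiliser you would get order $24$ and the wrong conclusion $s=4$, which Proposition~\ref{thm:2/3AR no 4/5AR} would then contradict --- so be explicit that the doubling from $6$ to $12$ is supplied by the reflections alone. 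Second, your justification of faithfulness (``an automorphism fixing a flag is trivial'') does not apply as stated: an element of the kernel fixes every $1$-face and every $2$-face, which is not a priori the same as fixing a flag. You need the short extra argument that, the $2$-faces being triangles, each $0$-face is the unique common vertex of two edges of some triangle, so the kernel fixes all $0$-faces, and dually all $3$-faces, hence fixes a flag and is trivial (and no duality lies in the kernel since it swaps the bipartition). Finally, since the theorem is stated for self-dual chiral polytopes without the properness hypothesis, replace ``the polarity $\delta$'' by ``any duality'' when establishing arc-transitivity; an arbitrary duality composed with a suitable automorphism still reverses the base edge. With those repairs, the counting $3\cdot 2^{s-1}=12$ or $6$, Tutte's theorem, and Proposition~\ref{thm:2/3AR no 4/5AR} complete the proof exactly as you outline.
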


\section{Improperly self-dual chiral polytopes of type $\{3,\lowercase{q},3\}$}
\label{sec:ISD chiral 3q3}

One of the final comments made by Monson and Weiss in \cite{MonsonWeiss-2008} raised the question of 
existence of improperly self-dual chiral polytopes of type $\{p,q,p\}$ (when $q$ is even). 

We answer this question positively by giving two examples, namely two improperly self-dual chiral polytopes 
of types $\{3,6,3\}$ and  $\{3,18,3\}$.
We constructed these examples (with the help of {\sc Magma}) by finding quotients of the rotation subgroup of the string Coxeter group $[3,q,3]$ 
for different fixed values of $q$, using the first method described in subsection \ref{subsec:constructingpolytopes}, 
and using Theorem \ref{thm:ISD or PSD chiral pol} to check the resulting chiral polytopes to see if any were improperly self-dual. 
    
The automorphism group of the first example is a quotient $A$ of the rotation subgroup of the Coxeter group $[3,6,3]$ with order $|A| = 18522$, 
obtainable by adding the relations
$$  (\sigma_1^{-1}\sigma_2^{\,2})^4\sigma_1\sigma_2^{-2}
      =  (\sigma_2^{\,2}\sigma_3^{-1})^3\sigma_2^{\,2}\sigma_3\sigma_2^{-2}\sigma_3^{-1}
      =  (\sigma_2^{-1}\sigma_3\sigma_1\sigma_2^{-1}\sigma_3\sigma_2\sigma_1)^2=1.
$$
This polytope has $147$ vertices, $3087$ edges, $3087$ 2-faces, and $147$ 3-faces,  
and its medial layer graph is a 2-arc-regular bipartite cubic graph of order $6174$, with automorphism group of order $2|A| = 37044$. 
As pointed out by the referee, the 147 facets are isomorphic to the chiral toroidal map of type $\{3,6\}_{(4,1)}$, 
and dually, the vertex-figures are isomorphic to the one of type $\{6, 3\}_{(4,1)}$.
    
The automorphism group of the second example is a quotient $B$ of the rotation subgroup of the Coxeter group $[3,18,3]$ with order $|B| = 39366 = 2 \cdot 3^9$, 
obtainable by adding the relations \\[-8pt]
$$  \sigma_2^{\,5} \sigma_1\sigma_2^{-2}\sigma_1\sigma_2^{-1}\sigma_1\sigma_2^{-4}\sigma_1
   = (\sigma_2^{-1}\sigma_3\sigma_1\sigma_2^{-1}\sigma_3^{-1}\sigma_1\sigma_2^{-1}\sigma_1)^2    $$ \\[-36pt]  $$
   = \sigma_2^{\,2}\sigma_1^{-1}\sigma_3\sigma_2 \sigma_1\sigma_2^{-1}\sigma_3 \sigma_2^{\,2}\sigma_1^{-1}\sigma_3\sigma_2 \sigma_1^{-1}\sigma_2\sigma_1^{-1}=1.
$$
 
\noindent 
This polytope has $81$ vertices, $6561$ edges, $6561$ 2-faces, and $81$ 3-faces,  
and its medial layer graph is a 2-arc-regular bipartite cubic graph of order $13122$, with automorphism group of order $2|B| = 78732$. 
Also we note (following a suggestion by the referee) that the group $B$ has a non-central normal subgroup $K$ 
of order $9$ such that the quotient $B/K$ (of order $2 \cdot 3^7$) is the rotation group of a directly regular polytope of type $\{3,18,3\}$, 
covered by the given chiral polytope (with abelian covering group isomorphic to $C_3 \times C_3$).    

\medskip
Recall that self-dual regular polytopes and properly self-dual chiral polytopes admit a polarity that reverses the base flag, 
while all dualities of improperly self-dual chiral polytopes send the base flag to the reverse of a flag in the other flag orbit, 
and hence improperly self-dual chiral polytopes do not admit such a polarity.  They can still admit other kinds of polarity, and indeed it was 
mentioned by Schulte and Weiss in \cite{SchulteWeiss} that they did not know at the time of any examples 
of highly symmetric self-dual polytopes that do not admit a polarity.  

The two examples above do not admit any polarities at all.
(The first one admits $18522$ dualities, of which $6174$ have order $4$ and $12348$ have order $12$, 
while the second admits $39366$ dualities, of which $4374$ have order $4$, and $8748$ have order $12$, and $26244$ have order $36$.)  
Among the very few other such examples known are the two improperly self-dual chiral 4-polytopes with types $\{4,4,4\}$ and $\{9,6,9\}$ given in the first 
author's  list of chiral polytopes with few flags \cite{Conder-Chiral}, and the universal polytope of type $\{\{4,4\}_{(1,3)},\{4,4\}_{(1,3)}\}$ 
mentioned by Hubard and Weiss in \cite{HubardWeiss}.

\section{Arc transitivity of $\mathcal{G}$ and $\mathcal{C}$}
\label{sec: arc transitivity of G and C}

Monson and Weiss asked in \cite{MonsonWeiss-2008} if the Cayley graph $\mathcal{C}$ they associated 
with a self-dual regular or chiral polytope of type $\{p,q,p\}$ could have a higher degree of $s$-arc-transitivity than the 
medial layer graph $\mathcal{G}$ that it covers -- or in other words, if $\mathcal{G}$ is $t$-arc-transitive 
and $\mathcal{C}$ is $s$-arc-transitive, could $s$ be greater than $t$? 
They were particularly interested in the case $p=3$, where Theorem \ref{thm:tutte} implies that $s \leq5$, 
and Theorem \ref{thm:p=3 s-AR of medial layer graph} implies that $t = 2$ or $3$. They suspected that $s$ cannot be 4 or 5, and we will prove it. 
    
Suppose that $\mathcal{G}$ is $t$-arc-transitive and $\mathcal{C}$ is $s$-arc-transitive.
We will show that if the polytope is chiral, then $s=t=2$, and if the polytope is regular, then $2 \leq s \leq 3=t$. 
To do this, we construct a subgroup of $\mathrm{Aut}(\mathcal{C})$ that acts 2-arc-regularly on $\mathcal{C}$, 
and show that this has index at most $2$ in $\mathrm{Aut}(\mathcal{C})$, and that if this index is $2$ then $\mathcal{P}$ is regular. 
    
Recall from Section \ref{sec:mediallayergraphs} that the Cayley group $G$ of $\mathcal{C}$ is generated by $\{\delta_1,\delta_2,\delta_3\}$, 
and is normal in $\overline{\Gamma}(\mathcal{P})^+ = \langle \sigma_1,\sigma_2,\sigma_3, \delta \rangle$.
Again with a slight abuse of notation, let $h$ and $p$ be the group homomorphisms of $G$ defined by conjugation 
by $\sigma_1$ and $\sigma_1\sigma_2\sigma_3$, respectively, and let $a$ be the involutory automorphism of $\mathcal{C}$ 
defined by right multiplication by $\delta_1$ ($= \delta$).  

By Lemma \ref{lem:deltas_lemma} we see that $h$ induces a $3$-cycle on the Cayley set $\{\delta_1,\delta_2,\delta_3\}$ for $\mathcal{C}$, 
while $p$ induces a  $2$-cycle (fixing $\delta_1$), so $h$ and $p$ induce graph automorphisms of $\mathcal{C}$, fixing the vertex $1_G$. 
Moreover, as  $\sigma_1\sigma_2\sigma_3$ commutes with $\delta_1$ and conjugates $\sigma_1$ to $\sigma_1^{-1}$, 
we find that $p^a=p$ and $h^p=h^{-1}$. 
It follows that $h$, $p$ and $a$ satisfy the defining relations for $G_2^{\,1}$ from \cite{ConderLorimer} that were given after Proposition \ref{thm:2/3AR no 4/5AR}, 
and so they generate a subgroup $T$ of $\mathrm{Aut}(\mathcal{C})$ that acts 2-arc-regularly on $\mathcal{C}$, 
with vertex-stabiliser $\langle h,p \rangle \cong S_3$. 

Moreover, by Proposition \ref{thm:2/3AR no 4/5AR} this implies that $\mathrm{Aut}(\mathcal{C})$ does not contain a $4$- or $5$-arc-regular subgroup, 
and so $\mathcal{C}$ cannot be $4$- or $5$-arc-regular, and hence is $2$- or $3$-arc-regular. 
In particular, the index of $T$ in $\mathrm{Aut}(\mathcal{C})$ is at most $2$.
    
Now if $\mathcal{P}$ is regular, then we know that $\mathcal{G}$ is $3$-arc-regular, and therefore $\mathcal{C}$ cannot have 
a higher degree of $s$-arc-regularity than $\mathcal{G}$.

On the other hand, we can show that if $\mathcal{P}$ is chiral 
(which implies by Theorem \ref{thm:p=3 s-AR of medial layer graph} that $\mathcal{G}$ is 2-arc-regular), 
then $\mathcal{C}$ cannot be $3$-arc-regular. 
For if we assume the contrary,  
then by Proposition \ref{prop:2AR subgroup in 3AR subgroup}, there exists a group automorphism $\theta$ of $\langle h,p,a\rangle = T$ 
that fixes $h$ and $p$, and takes $a$ to $ap$. 
Accordingly, $\theta$ induces an automorphism $\psi$ of $\langle \sigma_1,\sigma_1\sigma_2\sigma_3,\delta \rangle $ 
that fixes $\sigma_1$ and $\sigma_1\sigma_2\sigma_3$, and takes $\delta$ to $\delta\sigma_1\sigma_2\sigma_3$.  
From this and the fact that $\delta \sigma_3 \delta = \sigma_1^{-1}$ we find that 
    $$ \delta_2^{\,\psi} = (\sigma_1^{-1}\delta\sigma_1)^\psi 
      = \sigma_1^{-1}(\delta\sigma_1\sigma_2\sigma_3)\sigma_1 
      =  \sigma_1^{-1}\delta(\sigma_1\sigma_2\sigma_3)\sigma_1 
      = \delta\sigma_3(\sigma_3^{-1}\sigma_2^{-1}\sigma_1^{-1})\sigma_1
      = \delta\sigma_2^{-1},$$
and then since $\sigma_3 = \delta \sigma_1^{-1} \delta = \delta \sigma_1^{-1} \delta \sigma_1 \sigma_1^{-1} = \delta \delta_2 \sigma_1^{-1}$, 
it follows that 
    $$\sigma_3^{\,\psi} = (\delta \delta_2 \sigma_1^{-1})^\psi 
      = (\delta\sigma_1\sigma_2\sigma_3)(\delta\sigma_2^{-1})(\sigma_1^{-1})
      = (\sigma_3^{-1}\sigma_2^{-1}\sigma_1^{-1})\sigma_2^{-1}\sigma_1^{-1} 
      = \sigma_3^{-1}(\sigma_1\sigma_2)^{-2} 
      = \sigma_3^{-1},$$
and so the $\psi$-image of $\sigma_2$ is 
    $$\sigma_2^{\,\psi} = (\sigma_1^{-1}(\sigma_1\sigma_2\sigma_3)\sigma_3^{-1})^\psi = \sigma_1^{-1} \sigma_1\sigma_2\sigma_3 \,\sigma_3=\sigma_2\sigma_3^{\,2}.$$
    
In particular, $\psi$ induces the same automorphism of $\Gamma(\mathcal{P})^+ = \langle \sigma_1,\sigma_2,\sigma_3 \rangle$ 
as would conjugation by the reflection $\rho_3$ in the case where $\mathcal{P}$ is regular, namely taking $(\sigma_1,\sigma_2,\sigma_3)$ 
to $(\sigma_1,\sigma_2\sigma_3^{\,2},\sigma_3^{-1})$. But the rotation subgroup of a chiral polytope admits no such automorphism, 
which is a contradiction.  It follows that $\mathcal{C}$ cannot be $3$-arc-regular, as claimed. 
    
Thus we have the following: 
     
\begin{theorem}
 \label{thm: s vs t for cubic}
Let $\mathcal{P}$  be a properly self-dual finite regular or chiral polytope with type $\{3,q,3\}$ for some $q$. 
If $\mathcal{G}$ is $t$-arc-regular and $\mathcal{C}$ is $s$-arc-regular, then $s \leq t$.
\end{theorem}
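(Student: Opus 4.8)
The plan is to pin both $t$ and $s$ down to a narrow range and then eliminate the single problematic case by an algebraic obstruction coming from chirality. Since $\mathcal{G}$ and $\mathcal{C}$ are cubic ($p=3$), Theorem~\ref{thm:tutte} confines $s$ to $\{1,\dots,5\}$, while Theorem~\ref{thm:p=3 s-AR of medial layer graph} fixes $t=3$ when $\mathcal{P}$ is regular and $t=2$ when $\mathcal{P}$ is chiral. First I would exhibit an explicit $2$-arc-regular subgroup $T\le\mathrm{Aut}(\mathcal{C})$. The natural generators are $h$ and $p$, the automorphisms of the Cayley group $G$ induced by conjugation by $\sigma_1$ and by $\sigma_1\sigma_2\sigma_3$, together with $a$, right multiplication by $\delta_1$. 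Using Lemma~\ref{lem:deltas_lemma} one checks that $h$ cyclically permutes $\{\delta_1,\delta_2,\delta_3\}$ while $p$ transposes $\delta_2$ and $\delta_3$ and fixes $\delta_1$, so both stabilise the identity vertex and act as genuine graph automorphisms; the relations $p^a=p$ and $h^p=h^{-1}$ follow because $\sigma_1\sigma_2\sigma_3$ is an involution commuting with $\delta_1$ and inverting $\sigma_1$. These are exactly the defining relations of the universal group $G_2^{\,1}$, so $T=\langle h,p,a\rangle$ acts $2$-arc-regularly with vertex-stabiliser $\langle h,p\rangle\cong S_3$.

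With such a $T$ in hand, Proposition~\ref{thm:2/3AR no 4/5AR} immediately forbids any $4$- or $5$-arc-regular subgroup of $\mathrm{Aut}(\mathcal{C})$, so $\mathcal{C}$ is either $2$- or $3$-arc-regular, i.e.\ $s\in\{2,3\}$ and $T$ has index at most $2$ in $\mathrm{Aut}(\mathcal{C})$. In the regular case this already closes the argument, since $t=3\ge s$. Everything therefore reduces to showing that a chiral $\mathcal{P}$ cannot produce a $3$-arc-regular $\mathcal{C}$.

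For that final step I would argue by contradiction, assuming $s=3$. Proposition~\ref{prop:2AR subgroup in 3AR subgroup} then supplies a group automorphism $\theta$ of $T$ centralising $\langle h,p\rangle$ and sending $a\mapsto ap$. Transporting $\theta$ back through the identifications $h\leftrightarrow\sigma_1$, $p\leftrightarrow\sigma_1\sigma_2\sigma_3$, $a\leftrightarrow\delta$ yields an automorphism $\psi$ of $\langle\sigma_1,\sigma_1\sigma_2\sigma_3,\delta\rangle$ that fixes $\sigma_1$ and $\sigma_1\sigma_2\sigma_3$ and sends $\delta\mapsto\delta\sigma_1\sigma_2\sigma_3$. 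The crux is then a short computation: using the polarity relation $\delta\sigma_3\delta=\sigma_1^{-1}$ together with $\delta_2=\sigma_1^{-1}\delta\sigma_1$, one evaluates $\psi$ on $\delta_2$, then on $\sigma_3$, and finally on $\sigma_2$, obtaining the action $(\sigma_1,\sigma_2,\sigma_3)\mapsto(\sigma_1,\sigma_2\sigma_3^{\,2},\sigma_3^{-1})$ on the rotation subgroup $\Gamma(\mathcal{P})^+$.

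The obstruction is that this is precisely the automorphism that conjugation by the reflection $\rho_3$ would induce if $\mathcal{P}$ were regular. A chiral polytope admits no reflections, and by Theorem~\ref{thm:whenchiral} its rotation subgroup admits no automorphism of this reflecting type, a contradiction. Hence $s=2=t$ in the chiral case, and together with the regular case this gives $s\le t$. The step I expect to be the main obstacle is this last one: one must track $\psi$ through three nested substitutions, repeatedly rewriting words using only $\delta^2=1$, $\delta^{-1}\sigma_1\delta=\sigma_3^{-1}$, $\delta^{-1}\sigma_2\delta=\sigma_2^{-1}$ and $(\sigma_1\sigma_2\sigma_3)^2=1$, and it is essential that the induced action match the reflecting automorphism \emph{exactly}, since any discrepancy would dissolve the contradiction.
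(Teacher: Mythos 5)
Your proposal is correct and follows essentially the same route as the paper's own proof: the same subgroup $T=\langle h,p,a\rangle$ realising the $G_2^{\,1}$ relations, the same appeal to Proposition~\ref{thm:2/3AR no 4/5AR} to cap $s$ at $3$, and the same contradiction in the chiral case via Proposition~\ref{prop:2AR subgroup in 3AR subgroup} and the computation showing $\psi$ acts as conjugation by $\rho_3$ would. No substantive differences.
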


We complete this section by showing that both cases occur, by giving an example of a self-dual regular $4$-polytope $\mathcal{P}$ 
for which $\mathcal{C}$ is 2-arc-regular, and another for which $\mathcal{C}$ is 3-arc-regular.

The regular 4-simplex (or 5-cell) is a directly-regular polytope of type $\{3,3,3\}$ that has ten triangular 2-faces and five tetrahedral 3-faces, 
and automorphism group $S_5$ of order $120$.  Its medial layer graph $\mathcal{G}$ is isomorphic to the 3-arc-regular Desargues graph F020B 
(of order $20$) in the Foster census \cite{FosterCensus} and listed as C20.2 at \cite{Conder-SymmCubic10000},   
while the associated Cayley graph $\mathcal{C}$ is isomorphic to the 2-arc-regular graph F120B  
(of order $120$) in the Foster census \cite{FosterCensus} and listed as C120.2 at \cite{Conder-SymmCubic10000}.    

In contrast, the 24-cell (or octaplex) is a directly-regular polytope of type $\{3,4,3\}$ that has 96 triangular 2-faces and 24 octahedral 3-faces, 
and automorphism group of order $1152$.    Its medial layer graph $\mathcal{G}$ and the associated Cayley graph $\mathcal{C}$ 
are both isomorphic to the 3-arc-regular graph F192A in the Foster census \cite{FosterCensus} and listed as C192.3 at \cite{Conder-SymmCubic10000}. 
 
Curiously, the medial layer graph $\mathcal{G}$ and Cayley graph $\mathcal{C}$ for the 
properly self-dual chiral polytope of type $\{3,8,3\}$ listed at \cite{Conder-Chiral} are both isomorphic to another 
2-arc-regular graph of order $192$, namely the graph F192C  in \cite{FosterCensus} and listed as C192.2 at \cite{Conder-SymmCubic10000}. 
    
\section{Larger vertex-stabilisers for ${\mathcal C}$ than for ${\mathcal G}$}
\label{sec: larger stabiliser} 
    
The theory of 3-valent arc-transitive graphs is well-developed (as may be seen from its brief partial description in subsection~\ref{subsec:cubic graphs}, 
or in more detail in \cite{DjokovicMiller} or \cite{ConderLorimer}, for example), helped by the fact that Tutte's theorem (Theorem \ref{thm:tutte}) 
bounds the order of the vertex-stabilisers in the 3-valent case.  
For higher valencies it can be much more complicated, especially for non-prime valencies such as $4$ (for which the vertex-stabilisers have unbounded order). 
In particular, for such higher valencies not every arc-transitive finite graph is $s$-arc-regular for some $s$. 

Nevertheless a comparison of the automorphism groups of the medial layer graph $\mathcal{G}$ and its covering Cayley graph $\mathcal{C}$ 
for a regular or chiral $4$-polytope of type $\{p,q,p\}$ can be made in terms of the orders of their vertex-stabilisers. 
In the case $p = 3$ we know that this order is no greater for $\mathcal{C}$ than for $\mathcal{G}$, by Theorem \ref{thm: s vs t for cubic}, 
because those orders are $3 \cdot 2^{s-1}$ and $3 \cdot 2^{t-1}$ with $s \le t$. 
But we have an infinite family of examples with $p = 6$ for which the analogous property does not hold, constructible as follows. 

Let $\Lambda$ be the group obtainable from any string Coxeter group $[6,q,6]$ by deleting the relation $(x_2 x_3)^q = 1$, 
and adding the two extra relations  $((x_1 x_2)^2 x_3)^2  = ((x_3 x_4)^2 x_2)^2  = 1$.  

In this group, the element $(x_2 x_3)^3$ generates a normal subgroup $K$ of index $216$, 
which by the Reidemeister-Schreier process (implemented as the {\tt Rewrite} command in {\sc Magma})  
is free of rank $1$ and hence is infinite cyclic.  Moreover, $\Lambda/K$ is a smooth quotient of $\Lambda$, 
satisfying the intersection conditions (\ref{equ:int_conds_reg}), and in which the image of $x_2 x_3$ has order $3$.  
It follows that $\Lambda/K$ is the automorphism group of a regular polytope with type $\{6,3,6\}$, 
which (as noted by the referee) is the universal polytope $\{\{6,3\}_{(1,1)},\{3,6\}_{(1,1)}\}$ with $216$ flags 
given in~\cite[11C14]{McMullenSchulte}.

Now for any multiple of $3$, say $q = 3k$, let $N_q$ be the subgroup of $\Lambda$ generated by $(x_2 x_3)^q$. 
Then $N_q$ is characteristic in $N$ and therefore normal in $\Lambda$, with smooth quotient $\Lambda/N_q$ of order $216k = 72q$. 
It is not difficult to prove that $\Lambda/N_q$ satisfies the intersection conditions (\ref{equ:int_conds_reg}), and hence is the 
automorphism group of a directly-regular polytope $\mathcal{P}_q$ of type $\{6,q,6\}$. Also it is clear from the quotient group relations 
that $\mathcal{P}_q$ is self-dual. 
In fact for $k = 1$ to $9$, this is the self-dual regular $4$-polytope of type $\{6,3q,6\}$ with automorphism group of order $72q$ 
listed at \cite{Conder-Regular}. 

Moreover,  it can be shown that if $q$ is odd, then:
\\[+3pt] 
(a) the medial layer graph $\mathcal{G}$ of $\mathcal{P}_q$ has order $6q$, \\
(b) the automorphism group of $\mathcal{G}$ has order order $2^{2q+2}3^{2q}q$, \\
(c) the associated Cayley graph $\mathcal{C}$ has order $36q$, and \\
(d) the automorphism group of $\mathcal{C}$ has order order $2^{4q+3}3^{4q}q$. 
\\[+4pt]
The proofs of these facts are straightforward but time-consuming, and so we omit them. 
Details are given in Chapter 6 of the second author's Masters thesis~\cite{Steinmann-thesis}, in which it is  
shown that in fact $\mathcal{G}$ is isomorphic to the graph $C(3,2q,1)$ and $\mathcal{C}$ is isomorphic to the graph $C(3,4q,2)$,  
in a family of vertex-transitive graphs $C(p,r,s)$ with large vertex-stabilisers studied by Praeger and Xu in \cite{PraegerXu}. 

In particular, the stabiliser in $\Aut(\mathcal{G})$ of a vertex of $\mathcal{G}$ has order $2^{2q+1}3^{2q-1}$, 
while the stabiliser in $\Aut(\mathcal{C})$ of a vertex of $\mathcal{C}$ has order $2^{4q+1}3^{4q-2}$, 
which is larger than $2^{2q+1}3^{2q-1}$ for all $q$, and very much larger when $q$ is large. 

\smallskip
We also found a similar family of nonorientably-regular polytopes of type $\{4,q,4\}$ with $q = 6t$, for which 
 the vertex-stabilisers for the Cayley graph ${\mathcal C}$ are larger than those for the medial layer graph ${\mathcal G}$. 
For each of these, the automorphism group of the polytope is a smooth quotient of the finitely-presented group  
obtained from the $[4,6t,4]$ Coxeter group with the following extra relations:
\\[+6pt]
${}$\qquad\qquad $ [ x_1, x_2 x_3 x_2 x_3 x_2] = [ x_4, x_3 x_2 x_3 x_2 x_3] = ( x_2 x_1 x_2 x_3 x_4 x_3)^2 $
\\[+4pt] 
${}$\qquad\qquad\quad $  = x_1 x_2 x_3( x_1 x_2)^2 x_4 x_3 x_2( x_3 x_4)^2 = ( x_1 x_2 x_3)^3( x_2 x_3)^{3t-3} = 1$. 
\\[+6pt]
Here in each case the graphs ${\mathcal G}$ and ${\mathcal C}$ are isomorphic to the graphs $C(2,q,2)$ and $C(2,2q,4)$ 
from the same family of graphs $C(p,r,s)$ studied in \cite{PraegerXu}.

\section{Final remarks}
\label{sec:final remarks}

We complete this paper by mentioning a few open questions that naturally arise. 

First, for which values of $q$ do there exist improperly self-dual chiral polytopes of type $\{3,q,3\}$?  
We found examples with $q = 6$ and $q = 18$, but we see no reason why further examples could not exist.  
Similarly, it would be interesting to find examples of improperly self-dual chiral polytopes of type $\{3,q,3\}$ 
that admit polarities. This is guaranteed when $q$ is odd, but could they exist also when $q$ is even?
We know of no such examples. 

The referee suggested also looking for  improperly self-dual chiral polytopes of type $\{p,q,p\}$ for prime $p > 3$. 

Next, if $\mathcal{P}$ is a regular polytope of type $\{3,q,3\}$ for which the medial layer graph $\mathcal{G}$ 
is 3-arc-regular, then the associated Cayley graph can be either 2-arc-regular or 3-arc-regular. 
Are there necessary and sufficient conditions on $\mathcal{P}$ for each of these two possibilities?

Finally, a natural question to ask is whether Theorem \ref{thm: s vs t for cubic} holds also for regular and/or chiral polytopes 
of type $\{p,q,p\}$ for larger values of $p$.  Again, we know no such examples.

\newpage    

\medskip\bigskip\noindent 
{\Large\bf Acknowledgements}

\medskip\smallskip
\noindent 
The authors also acknowledge the helpful use of {\sc Magma}~\cite{Magma} in conducting experiments, testing 
conjectures and verifying many of the details of their construction and proof of the new discoveries presented in this paper.  
Also the first author is grateful to New Zealand's Marsden Fund for its support (via grant UOA 2030).


\end{document}